\newtheorem{theorem}{Theorem}[section]
\newtheorem{lemma}[theorem]{Lemma}
\newtheorem{definition}[theorem]{Definition}
\newtheorem{example}[theorem]{Example}
\newtheorem{remark}[theorem]{Remark}
\numberwithin{equation}{section}
\begin{document}

\begin{frontmatter}

\title{Frational p-Laplacian on Compact Riemannian Manifold}

\author[label1]{A. Ouaziz}
\address[label1]{ Faculty of Sciences Dhar Elmahraz, Sidi Mohamed Ben Abdellah University,  Fez, Morocco.}
\ead{abdesslamaitouaziz@gmail.com}

\author[label2]{A. Aberqi}
\address[label2]{Sidi Mohamed Ben Abdellah University, National School of Applied Sciences Fez, Morocco.}
\ead{ahmed.aberqi@usmba.ac.ma}
\begin{keyword}Non-linear fractional elliptic equation, Weak solution, Existence and uniqueness, Riemannian manifold
\end{keyword}
\begin{abstract}
In this paper, we investigate  the existence and uniqueness  of a non-trivial solution for a class of nonlocal equations involving the fractional $p$-Laplacian operator defined on  compact Riemannian manifold, namely,
\begin{eqnarray}\label{k1}
\begin{gathered}
\left\{\begin{array}{lll}
 (-\Delta_g)^s_p u(x)+ \left| u \right|^{p-2} u= f(x,u) & \text { in }& \Omega,\\
 
 \hspace{3,4cm} u=0 & \text{in }& M\setminus\Omega,
 \end{array}\right.
\end{gathered}
\end{eqnarray}
 and  $\Omega$ is an open bounded subset of M with a smooth boundary.
\end{abstract}
\end{frontmatter}
\section{Introduction}
Non-local operator problems have recently received a lot of attention in the literature. A good amount of investigation have focused on the existence and regularity of solutions to such problems governed by the fractional Laplacian in Euclidean space $\mathbb{R}^n$, see for instance \cite{  CG, MG, GQ, r10, r12,   M, r-4, r17}. For the prototype form $(-\Delta)^s$, $0<s<1$, which is the infinitesimal generator of the $s$-stable L\'evy processes \cite{r2}, the associate equation was treated by R. Servadei  and E. Valdinoci in  reference \cite {r17} by proving the existence of a  solution to  the following problem:
\begin{eqnarray}\label{k1}
\begin{gathered}
\left\{\begin{array}{lll}
 (-\Delta)^s u(x) = f(x,u) & \text{in }& \Omega,\\
 \hspace{1,45cm}u=0 & \text{in }& \mathbb{R}^{N}\setminus \Omega,
\end{array}\right.
\end{gathered}
\end{eqnarray}
where $(-\Delta)^s$ is a non- local operator defined as follow: 
$$ (-\Delta)^s (u(x))=  C(N, s) PV \int_{{R}^{N}} \frac{u(x)-u(y)}
{\left| {x-y} \right|^{N+ps}}dy,$$
see for instante the reference \cite{r17} for more details.
For the non-linear involving the $p$-fractional Laplacian which is defined by
\begin{equation*}
(-\Delta)^s_p u(x)=2\lim_{\varepsilon\to 0^{+}}
\int_{{R}^{N} \backslash B_\varepsilon(x)}\frac{\left| {u(x)-u(y)} \right|^{p-2}(u(x)-u(y))}
{\left| {x-y} \right|^{N+ps}}dy,
\end{equation*}
 where $ sp<N $ with $ s\in(0,1)$, $p\in (1, \infty), \Omega $ is an open bounded subset of $\mathbb{R^{N}}$ with smooth boundary  and  $ B_\varepsilon(x)$ the ball of  $ \mathbb{R}^{N}$ of center $x$ and radius $\epsilon$, we refer to \cite {CG,  MG, GQ, r10, r12, M, r-4}. 
This type of problem arises in many applications such as continuum mechanics, phase phenomena \cite{A-ph}, population dynamics,  game theory , crystal dislocation \cite{dislocation}, optimization, finance \cite{conte, duvaut}, stratified materials,  conversation laws  and minimals surfaces \cite{bile, caffarelli, valdinici, savin}.\\
For the framework of the above problems on the open set of $N$-dimensional, real Euclidean space $\mathbb{R}^N$, we recommend  \cite{Radulescu, r8}.\\
Here, in this paper, we are interested in the non-Euclidian case, i.e., the case of Riemannian manifold, and the treatment of non-linear fractional operators defined on Riemannian manifold defined as follow:
\begin{equation*}
(-\Delta_g)^s_p u(x)=2\lim_{\varepsilon\to 0^{+}}
\int_{M\backslash B_\varepsilon(x)}\frac{\left| {u(x)-u(y)} \right|^{p-2}(u(x)-u(y))}
{(d_g(x,y))^{N+ps}}\,d\mu_g(y),
\end{equation*}
for $x\in M$,  where  $ M$ is a compact Riemannian $N-$manifold,  $ d\mu_g(y)= \sqrt{\text{det}(g_{ij})}dy$ is the Riemannian volume element on $(M,g),   B_\varepsilon(x)$ denotes the geodessic ball of centre $x$  and radius $\epsilon,  dy $ is the Lebesgue volume element of $\mathbb{R^{N}}, d_{g}(x, y) $ defines a distance on $M,$ and $g$ a $\mathcal{C}^\infty$ Riemannian metric, see  the reference \cite{aubin} and section 2 for more information. Consider the following non-linear problem with lower order:
\begin{eqnarray}\label{k1}
\begin{gathered}
\left\{\begin{array}{lll}
 (-\Delta_g)^s_p u(x) +  \left| u \right|^{p-2} u= f(x,u) & \text{in } &\Omega,\\
\hspace{3,15cm} u=0  &\text{in }& M\setminus\Omega,
\end{array}\right.
\end{gathered}
\end{eqnarray}

 where $N> ps$ with $s\in(0,1)$, $p\in (1, \infty)$, and  $\Omega$ is an open bounded subset of $M$ with
 smooth boundary $\partial\Omega$, $f: \Omega \times \mathbb{R}\to\mathbb{R}$ is a Carath\'{e}odory
function satisfying the following conditions:
\begin{itemize}
 \item[(f1)]
 There exist $ \beta>0 $ and $ 1<q<p_s^*=\frac{Np}{N-ps}$ such that
$$
\left |f(x,t) \right| \leq \beta (1+\left| t\right| ^{q-1}),
$$
for a.e.\ $x\in\Omega$,\ $ t \in\mathbb{R}$.

 \item[(f2)] For $ 1<q<p_s^*$, we have
$$ \lim_{t\to \infty}\frac{f(x,t)}{\left |t \right|^{q-1}}=0  \,\,\,     \text {uniformly for a.e}
  \,\,\, x \in \Omega. $$ 
 
\item[(f3)] 
$$\lim_{\zeta\to 0} \frac{f(x,\zeta)}{\left|\zeta\right |^{p-1}} = 0\,\,\, \text {uniformly for a.e}\,\,\, x \in \Omega;$$
\item[(f4)]
(AR condition) There exists $\mu> p$ such that $$0<\mu F(x,t)\leq t f(x,t)  \text{  for a.e }   x\in \Omega  \text{  and  }\, t>0,$$
where  $\displaystyle  F(x,t)= \int_0^t f(x,s)ds$.
 \item[(f5)]
The function  $ h:\Omega \times  (0,\ \infty)  \to \mathbb{R^{+}}$ as
   \[
\ h(x,t)=\frac{f(x,t)}{t^{p-1}}
\] is decreasing in $(0, \infty)$ for a.e $ x \in \Omega$.

\end{itemize}
\begin{example}
 For $c>0$, the function $f(x,t)= c \left|t\right |^{p} \exp(-t)$  satisfying the above conditions.
\end{example}
\begin{remark}
The conditions {\rm (f1)}--{\rm (f4)} are used to prove the solution's existence, with the auxiliary condition {\rm (f5)} proving the solution's uniqueness.
\end{remark}
Our goal in this paper is to prove the existence and uniqueness of a non-trivial solution, via variational methods in the framework of fractional Sobolev space on Riemannian manifold, in which, we extend the results proved in  references \cite{aubin, CG, r10,  druet, guo2,  ilyas, trudinger} in the  non-Euclidean case, this generates some complications due to the non locality character of the operator, that is having a well defined Dirichlet problem in the non-local framework. So it  is not enough to prescribe the boundary condition $\partial \Omega$, since to compute the value of $(-\Delta_g)^s_p u(x)$ at $x\in \Omega$, we need to know the value of $u(x)$ in the whole $M$. Other complications are due to the non-Euclidean framework of our equation. For that, checking for example the density of Shwartz space $\mathcal{D}(M)$ in $W^{s,p}(M)$, it's not useful to consider a function $\mathcal{C}^\infty$ on $\mathbb{R}$, as in the proof of Euclidean case, because for Riemannian manifold $[d(P,Q)]^2$ is only Lipschitz function in $M$ and in $Q\in M$, $P$ being fixed point of $M$. For more functional properties of Sobolev space to compact Riemannian manifold, we refer to \cite{aubin, FSV1}. In addition, another challenge is to verify that the chosen test functions are admissible. \\

 The rest of this paper is structured as follows: In section \ref{sec2} we recall some Definitions, Lemmas, and Theorems, that will help us in our analysis. In section \ref{sec4} we will prove the existence of a non-trivial solution using the Mountain Pass Theorem, and in section \ref{sec5}, we will show the uniqueness of a non-trivial solution of our problem.

\section{  Background Material}\label{sec2}
First of all, we recall the most important and relevant properties and notations, by  referring to \cite{aubin, benslimane2020existence, FSV1} for more details.
\begin{definition} 
Let $(M,g)$ be an $N$-dimensional Riemannian manifold
and let $\nabla$ be the Levi-Civita connection.
For $u\in C^\infty(M)$, then $\nabla^k u$ denotes the $k-$th covariant
derivative of $u$. In local coordinates, the pointwise norm of $\nabla^k u$
is given by $$
 \left |\nabla^{k}u \right| = g^{i_1j_1}\cdot\cdot\cdot g^{i_kj_k} (\nabla^k u)_{i_1i_2\dots i_k} (\nabla^k u)_{j_1j_2\dots j_k}.$$

When $k=1$, the components of $\nabla u$  in local coordinates are given by
$$(\nabla u)_i=\nabla^iu$$. By definition, one has that
 $$
   \left |\nabla u \right| = \sum_{i,j=1}^\infty g^{ij}\nabla^iu\nabla^ju.
$$
\end{definition} 
 \begin{definition}
 Let $(M,g)$ be an $N$-dimensional Riemannian manifold
and \\
$\gamma:[a,b]\subset\ \mathbb {R}  \longrightarrow  M $ a curve of class $ C^{1}$,  the length of $c$  is  $$ l(\gamma)= \int_a^b (g(\gamma'(t), \gamma'(t)))^{\frac{1}{2}} dt. $$
\end{definition}

 \begin{definition}
 
Let $(M,g)$ be an $N-$dimensional Riemannian manifold, and let $C^{1} _{x,y}$  be the space of piecewise  $ C^{1}$ curves $ \gamma:[a,b]\subset\ \mathbb {R}  \longrightarrow  M $ such that $\gamma(a)=x$  and $\gamma(b)=y$. The distance between $x$ and $y$ is defined by $$ d(x,y)=   \mbox {inf}\{  l(\gamma),     \text{ $\gamma$ is a differentiable curve connecting $x$ and $y$  } \}.$$
\end{definition}

 \begin{remark}
  Let $(M,g)$ be an $N$-dimensional Riemannian manifold, then $(M,g)$  is a  metric space.
 \end{remark}
 \begin{definition}
 Consider $X$ to be a topological space, and $ U=\{ U_{i},  i\in I\}$  to be an open covering of X. A subordinate to $ U $ is a family of finite functions that satisfy two conditions, $ \sum_{i \in I} \eta  _{i}=1$ and $ Supp\
  \eta  _{i} \subset  U_{i}$.
\end{definition}
\begin{theorem}{\cite {aubin}}.
Let $ X$ be a paracompact differential manifold and let $ U_{i}  $ be an open cover of  $X $,  then there exists a locally finite partition of class $ C ^\infty(X)$  in  $X $, subordinate to $U_{i}$.
\end{theorem}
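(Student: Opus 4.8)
The plan is to follow the classical three-step construction of a smooth partition of unity: first pass to a well-behaved locally finite refinement of $\{U_i\}$ by coordinate domains, then manufacture $C^\infty$ bump functions adapted to this refinement, and finally normalize and regroup to produce a family indexed by the original cover.

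First I would use that a differential manifold is locally compact and Hausdorff, hence, being paracompact, normal. Combining paracompactness with local compactness, I would extract an open refinement $\{V_j\}_{j\in J}$ of $\{U_i\}$ that is locally finite and such that each $V_j$ is the domain of a chart $\varphi_j\colon V_j\to\mathbb{R}^n$ (with $n=\dim X$), $\overline{V_j}$ is compact, and for each $j$ there is an index $\tau(j)\in I$ with $\overline{V_j}\subset U_{\tau(j)}$. Then I would apply the shrinking lemma, valid in any normal space for locally finite open covers, to obtain open sets $W_j$ with $\overline{W_j}\subset V_j$ and $\bigcup_{j} W_j=X$.

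Next, for each $j$ I would pull back a standard Euclidean bump function through $\varphi_j$: choosing radii so that $\varphi_j(\overline{W_j})$ sits inside a ball on which a fixed smooth function is identically $1$ and whose closure lies in $\varphi_j(V_j)$, I set $\psi_j$ equal to the corresponding pulled-back function on $V_j$ and $0$ on $X\setminus V_j$. Local finiteness of $\{V_j\}$ makes $\psi_j\in C^\infty(X)$ and makes $\Psi:=\sum_{j\in J}\psi_j$ a locally finite sum, hence $\Psi\in C^\infty(X)$; moreover $\Psi>0$ everywhere since $\{W_j\}$ covers $X$ and $\psi_j>0$ on $W_j$. Putting $\theta_j:=\psi_j/\Psi$ gives a smooth partition of unity subordinate to $\{V_j\}$. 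To obtain one indexed by $I$ and subordinate to $\{U_i\}$, I set $\eta_i:=\sum_{j\in\tau^{-1}(i)}\theta_j$; again by local finiteness each $\eta_i$ is $C^\infty$, the family $\{\eta_i\}$ is locally finite, $\operatorname{supp}\eta_i=\bigcup_{\tau(j)=i}\overline{\operatorname{supp}\theta_j}\subset U_i$, and $\sum_{i\in I}\eta_i=\sum_{j\in J}\theta_j\equiv 1$.

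The step I expect to be the main obstacle is the first one: producing the locally finite refinement by relatively compact chart domains together with the shrinking whose closures stay inside the charts and still cover $X$. This is precisely where paracompactness is used, and it has to be combined carefully with local compactness and normality; once this topological scaffolding is in place, the analytic part — transporting bump functions, differentiating locally finite sums, normalizing, and regrouping — is routine. A secondary point to keep track of throughout is that every infinite sum that appears ($\Psi$, each $\eta_i$, and $\sum_i\eta_i$) is locally finite, which is what legitimates the smoothness claims and the support identity $\operatorname{supp}\eta_i=\bigcup_{\tau(j)=i}\overline{\operatorname{supp}\theta_j}$.
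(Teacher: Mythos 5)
The paper does not actually prove this statement: it is quoted as background material with a citation to Aubin's monograph, so there is no argument in the paper to compare yours against. Your construction is the standard one (locally finite refinement by relatively compact chart domains, shrinking, pulled-back Euclidean bump functions, normalization by the sum, regrouping via a choice function $\tau\colon J\to I$), and its architecture is sound: you correctly isolate where each hypothesis is consumed (paracompactness plus local compactness for the refinement, normality for the shrinking lemma) and you correctly invoke local finiteness each time an infinite sum is differentiated or a closure is pushed through a union.

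One step is stated too optimistically. You ask for a single Euclidean ball $B$ with $\varphi_j(\overline{W_j})\subset B$ and $\overline{B}\subset\varphi_j(V_j)$; but a compact subset of an open subset of $\mathbb{R}^n$ need not be contained in any such ball (take $\varphi_j(V_j)$ an annulus and $\varphi_j(\overline{W_j})$ a sphere inside it). The gap is harmless and standard to repair: either cover the compact set $\varphi_j(\overline{W_j})$ by finitely many balls whose closures lie in $\varphi_j(V_j)$ and let $\psi_j$ be the sum of the corresponding bumps (still $\geq 1$ on $\overline{W_j}$ after scaling, with compact support in $V_j$), or invoke a smooth Urysohn function for a compact set inside an open set in $\mathbb{R}^n$, or --- cleanest --- build the refinement in the first step out of ``regular'' charts with $\varphi_j(V_j)=B(0,3)$ and set $W_j=\varphi_j^{-1}(B(0,1))$, which also makes the separate appeal to the shrinking lemma unnecessary. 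With that adjustment the proof is complete.
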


 \begin{definition}
 Let $(M,g)$  be an $N$-dimensional Riemannian manifold, $(\Omega _{i}, \varphi _{i} )_{i \in I} $ an atlas  of $M$  and  $(\Omega _{i}, \varphi _{i},  \eta  _{i} )$ a partition of a subordinate to $ (\Omega _{i}, \varphi _{i} ) _{i \in I}$. We can define the Riemannian measure as follows $u:M  \longrightarrow  \mathbb{R} $ with compact support by $$ \int_M u(x) \mu  _{g}(x)=\sum_{k\in J}\int_{\varphi_k(\Omega_k)}
\Big(\sqrt{\det(g_{ij})}\eta_k u\Big)\circ \varphi_k^{-1}(x)dx,$$
where $g_{ij}, i, j \in I $ are the components of the Riemannian metric $g$ in the chart  $(\Omega _{i}, \varphi _{i} )$ and $dx$ is the Lebesgue volume element of $\mathbb{R^{N}}.$\\
\end{definition}
Let $ 0<s<1 $, and  $ 1<p<\infty $ be real numbers. The fractional Sobolev space $ W^{s,p}(M) $ is defined as follows:
 It is endowed with the natural norm

$$  \|u\|_{W^{s,p}(M)}=\Bigg(\int_M \left| u(x) \right|^{p} d\mu_g(x)+[u]_{W^{s,p}(M)}^p\Bigg)^{1/p},$$ with $$ [u]_{W^{s,p}(M)}=\Bigg(\iint_{M\times M}\frac{\left| u(x)-u(y) \right|^{p}}{(d_g(x,y))^{N+ps}}d\mu
 _g(x)d\mu  _g(y)\Bigg)^{1/p},$$
where $ [u]_{W^{s,p}}$ is the Gagliardo semi-norm.\\
Let us denote by $W_{0}^{s,p}(M)$ the closure of $C_{0}^{\infty}(\Omega)$ in $W^{s,p}(M).$
Notice that $W_{0}^{s,p}(M)$ and $ W^{s,p}(M)$ are   reflexive and separable Banach spaces, for all $ 0 < s < 1 < p < \infty.$ 
We refer to \cite {r10} for more details.
\begin{lemma}  \cite {r10} \label{lemma3}
Let $(M,g)$ be an $N$-dimensional Riemannian manifold. Then,
\begin{itemize}
\item[(1)] There exists a positive constant $C_1=C_1(N,p,q,s)$ such that for any\\
 $u\in W_0^{s,p}(M)$ and $1\leq q \leq p^*_s$,
\[
\|u\|^{p}_{L^{q}(M)}\leq C_1\iint_{M\times M}\frac{\left| u(x)-u(y) \right|^{p}}{(d_g(x,y))^{N+ps}}
d\mu  _g(x)d\mu  _g(y).
\]

\item[(2)] There exists a constant $\widetilde{C}=\widetilde{C}(N,p,q,s)$ such that
 for any\\ $u\in W_0^{s,p}(M)$,
\begin{align*}
&\iint_{M\times M}\frac{\left| {u(x)-u(y)} \right|^{p}}{(d_g(x,y))^{N+ps}}d\mu  _g(x)d\mu  _g(y) 
\leq \|u\|^p_{W^{s,p}(M)} \\
&\leq \widetilde{C}\iint_{M\times M}\frac{\left| u(x)-u(y) \right|^{p}}{(d_g(x,y))^{N+ps}}
 d\mu  _g(x)d\mu  _g(y).
\end{align*}
\end{itemize}
 Consequentially, the space  $ W_0^{s,p}(M)$ is continuously embedded in $ L^{q}(M) $ for any $ q \in[p, p^*_s],$
 where $p_s^*$ is a  critical exponent defined by:
 \begin{align*}
p_s^*=
\begin{cases}
\frac{Np}{N-sp} &\text{if } sp<N,\\
\infty &\text{if } sp\geq N.
\end{cases}
\end{align*}
\end{lemma}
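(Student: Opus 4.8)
The plan is to transfer the known Euclidean fractional Sobolev inequality to $M$ by means of a finite atlas and a subordinate partition of unity, exploiting throughout that $M$ is compact with a $\mathcal C^\infty$ metric. Note first that part (2) and the final embedding reduce to part (1) together with a fractional Poincar\'e inequality on $W_0^{s,p}(M)$. Indeed, if $u\in W_0^{s,p}(M)$ then $u\equiv0$ on $M\setminus\Omega$, a set of positive $\mu_g$-measure, so, using $\operatorname{diam}_g(M)<\infty$,
\begin{align*}
\mu_g(M\setminus\Omega)\,\|u\|_{L^p(M)}^p
&=\int_{\Omega}\int_{M\setminus\Omega}|u(x)-u(y)|^p\,d\mu_g(y)\,d\mu_g(x)\\
&\le \iint_{M\times M}|u(x)-u(y)|^p\,d\mu_g(x)\,d\mu_g(y)\\
&\le (\operatorname{diam}_g M)^{N+ps}\,[u]_{W^{s,p}(M)}^p,
\end{align*}
whence $\|u\|_{L^p(M)}^p\le C_0[u]_{W^{s,p}(M)}^p$. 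This gives (2): the bound $[u]_{W^{s,p}(M)}^p\le\|u\|_{W^{s,p}(M)}^p$ is the definition of the norm, and adding the Poincar\'e estimate to $[u]^p$ yields $\|u\|_{W^{s,p}(M)}^p\le(1+C_0)[u]_{W^{s,p}(M)}^p$, so one may take $\widetilde C=1+C_0$. Once (1) holds for $q=p_s^*$, H\"older's inequality and $\mu_g(M)<\infty$ give $\|u\|_{L^q(M)}\le\mu_g(M)^{\frac1q-\frac1{p_s^*}}\|u\|_{L^{p_s^*}(M)}$ for every $q\in[1,p_s^*]$, which both extends (1) to all such $q$ and, combined with (2), produces the continuous embedding $W_0^{s,p}(M)\hookrightarrow L^q(M)$ for $q\in[p,p_s^*]$.

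\textbf{Geometric setup for (1).} By compactness I would fix a finite atlas $(\Omega_i,\varphi_i)_{i=1}^m$ with each $\varphi_i(\Omega_i)$ bounded in $\mathbb R^N$, a subordinate smooth partition of unity $\{\eta_i\}$ with $\operatorname{supp}\eta_i$ contained compactly in $\Omega_i$, and a Lebesgue number $\delta>0$ of the cover so that $d_g(x,y)<\delta$ forces $x,y$ into a common chart. Since $g$ is $\mathcal C^\infty$ and $M$ compact, the $g_{ij}$ are uniformly bounded and uniformly elliptic on each chart, so there is $\Lambda\ge1$ with $\Lambda^{-1}|\varphi_i(x)-\varphi_i(y)|\le d_g(x,y)\le\Lambda|\varphi_i(x)-\varphi_i(y)|$ for $x,y$ near $\operatorname{supp}\eta_i$ and $\Lambda^{-1}\,dx\le d\mu_g\le\Lambda\,dx$ in each chart; these two comparisons are the only geometric input.

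\textbf{Core argument for (1).} By density it suffices to treat $u\in C_0^\infty(\Omega)$ (the general case following by approximation and Fatou's lemma). Write $u=\sum_{i=1}^m\eta_iu$ and set $v_i=(\eta_iu)\circ\varphi_i^{-1}\in C_0^\infty(\varphi_i(\Omega_i))$. The Euclidean fractional Gagliardo--Sobolev--Nirenberg inequality $\|v_i\|_{L^{p_s^*}(\mathbb R^N)}\le C(N,p,s)[v_i]_{W^{s,p}(\mathbb R^N)}$, pulled back via the distortion bounds above, gives $\|\eta_iu\|_{L^{p_s^*}(M)}\lesssim[v_i]_{W^{s,p}(\mathbb R^N)}$ with a constant depending only on $\Lambda$; splitting the Euclidean double integral according to whether a point leaves $\varphi_i(\Omega_i)$ yields $[v_i]_{W^{s,p}(\mathbb R^N)}^p\lesssim[\eta_iu]_{W^{s,p}(M)}^p+\|u\|_{L^p(M)}^p$. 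Finally the pointwise bound $|\eta_i(x)u(x)-\eta_i(y)u(y)|\le\|\eta_i\|_\infty|u(x)-u(y)|+|u(y)|\,|\eta_i(x)-\eta_i(y)|$, combined with $|\eta_i(x)-\eta_i(y)|\le L\,d_g(x,y)$ on $\{d_g(x,y)<\delta\}$ (where the exponent $-(N+ps)+p=-N+p(1-s)$ is $>-N$ since $s<1$, hence integrable near the diagonal) and $|\eta_i(x)-\eta_i(y)|\le1$ on $\{d_g(x,y)\ge\delta\}$, gives $[\eta_iu]_{W^{s,p}(M)}^p\lesssim[u]_{W^{s,p}(M)}^p+\|u\|_{L^p(M)}^p$. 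Summing over the finitely many indices $i$ gives $\|u\|_{L^{p_s^*}(M)}^p\lesssim[u]_{W^{s,p}(M)}^p+\|u\|_{L^p(M)}^p$, and absorbing the $L^p$ term by the Poincar\'e inequality of the first paragraph produces $\|u\|_{L^{p_s^*}(M)}^p\le C_1[u]_{W^{s,p}(M)}^p$; the range $1\le q\le p_s^*$ is then covered by H\"older as above.

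\textbf{Main obstacle.} The delicate point is the last step: the two ``transfer'' estimates --- bounding $[v_i]_{W^{s,p}(\mathbb R^N)}$ by the Riemannian Gagliardo seminorm of $\eta_iu$ with a constant independent of $u$, and the Leibniz-type control of the Gagliardo seminorm of a product by a Lipschitz cutoff --- are where the nonlocality genuinely interferes, since the double integral does not split along a partition of unity the way a local (gradient) energy would; one is forced to trade the off-diagonal contributions for a lower-order $L^p$ term and then recover that term by the Poincar\'e inequality. Everything else (the Poincar\'e estimate, the descent from the critical exponent to smaller ones, and the norm equivalence (2)) is routine once compactness of $M$ and smoothness of $g$ are used to compare the Riemannian and Euclidean structures.
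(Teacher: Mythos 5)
The paper does not actually prove this lemma: it is quoted verbatim from the cited reference \cite{r10}, so there is no in-paper argument to compare against. Your reconstruction is, however, essentially the standard proof one finds in that literature, and it is sound: the Poincar\'e step (exploiting that $u$ vanishes on $M\setminus\Omega$ and that $d_g\le\operatorname{diam}_g M$) correctly reduces part (2) and the absorption of the $L^p$ remainder to part (1); the localization via a finite atlas, Lebesgue number, bi-Lipschitz chart comparison, the Euclidean Gagliardo--Nirenberg--Sobolev inequality, and the Leibniz-type estimate for $[\eta_i u]_{W^{s,p}}$ (with the near-diagonal exponent $-N+p(1-s)>-N$ and the trivial off-diagonal bound) are exactly the right ingredients, and you correctly identify the off-diagonal/$L^p$ bookkeeping as the only place where nonlocality causes genuine friction. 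Two caveats worth recording. First, your argument (correctly) produces constants depending on the atlas of $M$, the metric comparison constant $\Lambda$, and on $\mu_g(M\setminus\Omega)$; the lemma's notation $C_1=C_1(N,p,q,s)$ is therefore misleading, and your Poincar\'e step silently requires $\mu_g(M\setminus\Omega)>0$ --- without that hypothesis part (1) with $q=p$ is false (constants on $\Omega=M$ have zero seminorm), so you should state it explicitly. Second, you only treat $sp<N$; the statement formally allows $sp\ge N$ with $p_s^*=\infty$, though the paper assumes $N>ps$ throughout, so nothing downstream is affected.
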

\begin{definition}
We say that a functional $\psi$  satisfies the Palais-Smale condition in   $ W_{0}^{s,p}(M)$, if   for any sequence $u_{n}\subset W_{0}^{s,p}(M) $ such that $\psi(u_{n}) \to c $ and $\psi'(u_{n}) \to 0$ in   $W_{0}^{s,p}(M)^{*}$ as $n\to \infty, $ then $u_{n} $ admits a convergent subsequence.
\end{definition}

 \begin{lemma} \cite {r212} \label{2-10}(Fractional Picone inequality). Let $u, v\in  W_0^{s,p}(M)$ with $ u >0$. Assume that $ (-\Delta_g)^s_p u$ is a positive bounded Radon measure in $\Omega$. Then $$  \int_{\Omega}\frac{v^{p}}{u^{p-1}} (-\Delta_g)^s_p u d\mu  _g(x) \leq  \|v\|^p_{W^{s,p}(M)}. $$
 \end{lemma}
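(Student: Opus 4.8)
\emph{Proof idea.} The approach is the one standard for Picone-type inequalities: reduce the integral inequality to a pointwise algebraic one, integrate that pointwise inequality against the kernel $(d_g(x,y))^{-(N+ps)}$, and recognize the resulting double integral as the action of the measure $(-\Delta_g)^s_p u$ on the function $v^{p}/u^{p-1}$ through the (symmetrized) weak formulation of the operator.

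The first step is the \emph{discrete Picone inequality}: for all $a,b>0$ and $c,d\ge 0$,
\begin{equation*}
|a-b|^{p-2}(a-b)\left(\frac{c^{p}}{a^{p-1}}-\frac{d^{p}}{b^{p-1}}\right)\le |c-d|^{p},
\end{equation*}
with equality precisely when $c/a=d/b$. I would prove this by treating the cases $a\ge b$ and $a<b$ separately, expanding the left-hand side, and invoking Young's inequality $x^{p-1}y\le\frac{p-1}{p}x^{p}+\frac1p y^{p}$ together with the convexity of $t\mapsto t^{p}$ on $[0,\infty)$; for $p=2$ it reduces to the arithmetic--geometric mean inequality $ad^{2}/b+bc^{2}/a\ge 2cd$.

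Next, substituting $a=u(x)$, $b=u(y)$, $c=|v(x)|$, $d=|v(y)|$ (it suffices to work with $|v|$, since only $v^{p}$ enters), dividing by $(d_g(x,y))^{N+ps}>0$ and integrating over $M\times M$ against $d\mu_g(x)\,d\mu_g(y)$, the right-hand side becomes $[|v|]^{p}_{W^{s,p}(M)}=[v]^{p}_{W^{s,p}(M)}$, while, using the $x\leftrightarrow y$ symmetry of the double integral together with the factor $2$ in the definition of $(-\Delta_g)^s_p$, the left-hand side is exactly $\int_\Omega \frac{v^{p}}{u^{p-1}}\,(-\Delta_g)^s_p u\,d\mu_g$. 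Since $[v]^{p}_{W^{s,p}(M)}\le\|v\|^{p}_{W^{s,p}(M)}$ by the very definition of the norm, this yields the stated bound.

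The step that requires care --- and the one the authors single out in the introduction --- is the identification of the double integral with the pairing $\int_\Omega \frac{v^{p}}{u^{p-1}}(-\Delta_g)^s_p u\,d\mu_g$, i.e.\ the admissibility of $\phi=v^{p}/u^{p-1}$ as a test function. Because $u$ is only assumed positive, not bounded away from $0$, $\phi$ need not belong to $W_0^{s,p}(M)$ and the naive pairing is not justified. I would handle this with a truncation/approximation scheme: replace $u$ by a regularization $u_\varepsilon$ bounded away from $0$ and from $\infty$ on $\Omega$ and $v$ by $v_k=\min\{|v|,k\}$, so that $v_k^{p}/u_\varepsilon^{p-1}$ is an honest test function, apply the previous steps to the regularized pair, and then let $k\to\infty$ and $\varepsilon\to 0$, passing to the limit by Fatou's lemma on the double integral and by monotone/dominated convergence against the measure $(-\Delta_g)^s_p u$ --- here its positivity and finiteness are exactly what make the one-sided limit legitimate. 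A secondary point peculiar to the Riemannian setting is that the kernel is built from the geodesic distance $d_g$, which is merely Lipschitz on $M$; but the pointwise Picone inequality is applied for $\mu_g\otimes\mu_g$-a.e.\ $(x,y)$ and uses no smoothness of $d_g$, so this does not interfere.
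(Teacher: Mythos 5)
The paper offers no proof of this lemma to compare yours against: it is quoted from the reference \cite{r212} and used as a black box. Your strategy --- the discrete Picone inequality $|a-b|^{p-2}(a-b)\bigl(\tfrac{c^{p}}{a^{p-1}}-\tfrac{d^{p}}{b^{p-1}}\bigr)\le|c-d|^{p}$, integration against the kernel, symmetrization to identify the double integral with the pairing against $(-\Delta_g)^s_p u$, and an approximation argument to legitimize the test function --- is exactly the standard proof from that literature; your bookkeeping of the factor $2$ in the definition of the operator against the $\tfrac12$ gained by symmetrizing is correct, as is the final relaxation $[v]^{p}_{W^{s,p}(M)}\le\|v\|^{p}_{W^{s,p}(M)}$.

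One step of your regularization scheme would cause trouble as written. If you replace $u$ by a two-sided truncation $u_\varepsilon$ (cut off above and below), the pointwise Picone inequality applied to $u_\varepsilon$ controls the pairing of $v_k^{p}/u_\varepsilon^{p-1}$ with the \emph{different} object $(-\Delta_g)^s_p u_\varepsilon$, about which the hypotheses say nothing, so you cannot then pass to the limit ``against the measure $(-\Delta_g)^s_p u$.'' The standard fix, which you should make explicit, is to shift rather than truncate: take $a=u(x)+\varepsilon$, $b=u(y)+\varepsilon$, $c=v_k(x)$, $d=v_k(y)$ with $v_k=\min\{|v|,k\}$. The shift leaves the differences $u(x)-u(y)$ unchanged, so the integrated inequality becomes
$$\int_\Omega\frac{v_k^{p}}{(u+\varepsilon)^{p-1}}\,(-\Delta_g)^s_p u\,d\mu_g\;\le\;[v_k]^{p}_{W^{s,p}(M)}\;\le\;[v]^{p}_{W^{s,p}(M)},$$
where $v_k^{p}/(u+\varepsilon)^{p-1}$ is now an honest test function (bounded, vanishing outside $\Omega$, and in $W_0^{s,p}(M)$ by the Lipschitz-composition Lemma \ref{4.2} together with the product rule for bounded fractional Sobolev functions). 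The shift is also what makes the pointwise inequality applicable on all of $M\times M$ in the first place, since $u\equiv0$ on $M\setminus\Omega$ and the discrete inequality requires $b>0$. Letting $\varepsilon\downarrow0$ and $k\uparrow\infty$, the left-hand integrands increase monotonically, and the positivity and finiteness of the Radon measure $(-\Delta_g)^s_p u$ yield the conclusion by monotone convergence --- the one-sided limit you describe. With that correction your outline is a complete and correct proof of the cited result.
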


\begin{remark}
We will use Picone's Lemma to show that the functions of type  $\displaystyle \frac{v^p}{u^p-1}$  and $\displaystyle \frac{u^p}{v^p-1}$   are  admissible test  functions, which play a crucial role in  the uniqueness part.
\end{remark}

\section{Main Results}\label{sec4}
In this section, we study the existence of  a non-trivial weak solution of problem \eqref{k1}.
\begin{definition}
A function $u\in W_0^{s,p}(\Omega)$ is said to be a weak solution of problem \eqref{k1}, if
\begin{align*}
&\iint_{M \times M}\frac {\left| u(x)-u(y) \right|^{p-2}(u(x)-u(y))(v(x)-v(y))}{(d_g(x,y))^{N+ps}}
 d\mu_g(x)d\mu_g(y)\\
 &+\int _{M} \left| u(x) \right|^{p-2} u(x)v(x)d\mu _g(x) =\int_{\Omega}f(x,u(x))v(x)d\mu  _g(x),
 \end{align*}
for any $v\in W_0^{s,p}(M)$.
\end{definition}
In the following, we will prove the existence of a non-trivial solution for the case where $\displaystyle q\in (1,p).$ 
\begin{theorem}\label{k5}
Under assumptions  {\rm (f1)}--{\rm (f3)}. If $1<q<p$, then the problem \eqref{k1}
 has a non-trivial weak solution in $W_0^{s,p}(M)$.\\
\end{theorem}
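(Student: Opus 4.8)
The idea is to realize the solution as a global minimiser of the energy functional attached to \eqref{k1},
\[
J(u)=\frac{1}{p}\,\|u\|_{W^{s,p}(M)}^{p}-\int_{\Omega}F(x,u(x))\,d\mu_g(x),\qquad u\in W_0^{s,p}(M),
\]
where $F(x,t)=\int_0^t f(x,\tau)\,d\tau$ (so $F(x,0)=0$ and $J(0)=0$). Under the subcritical growth (f1) the term $u\mapsto\int_\Omega F(x,u)\,d\mu_g$ is finite and of class $C^1$ on $W_0^{s,p}(M)$, and together with the differentiability of $u\mapsto\|u\|_{W^{s,p}(M)}^{p}$ this gives $J\in C^1(W_0^{s,p}(M),\mathbb R)$ whose critical points are exactly the weak solutions of \eqref{k1} in the sense of the definition above. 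Since here $1<q<p$, the functional will turn out to be coercive, so rather than exhibiting a Mountain Pass geometry I would simply run the direct method of the calculus of variations.

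\emph{Step 1 (coercivity and boundedness below).} First I would show $J$ is coercive. From (f1), $|F(x,t)|\le\beta|t|+\frac{\beta}{q}|t|^{q}$; since $M$ is compact we have $L^p(M)\hookrightarrow L^q(M)$ for $q\le p$, so combining with Lemma~\ref{lemma3} one gets $\int_\Omega F(x,u)\,d\mu_g\le C\big(\|u\|_{W^{s,p}(M)}+\|u\|_{W^{s,p}(M)}^{q}\big)$. Consequently
\[
J(u)\ \ge\ \frac1p\|u\|_{W^{s,p}(M)}^{p}-C\|u\|_{W^{s,p}(M)}-C\|u\|_{W^{s,p}(M)}^{q},
\]
and because $q<p$ this tends to $+\infty$ as $\|u\|_{W^{s,p}(M)}\to\infty$; in particular $m:=\inf_{W_0^{s,p}(M)}J>-\infty$.

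\emph{Step 2 (weak lower semicontinuity and the direct method).} Next I would check that $J$ is sequentially weakly lower semicontinuous. The map $u\mapsto\|u\|_{W^{s,p}(M)}^{p}$ is convex and strongly continuous, hence weakly lower semicontinuous. For the nonlinear term I would invoke the compact embedding $W_0^{s,p}(M)\hookrightarrow\hookrightarrow L^{q}(M)$, valid because $q<p_s^{\ast}$ (a fractional Rellich–Kondrachov theorem on the compact manifold, as in the Euclidean case): if $u_n\rightharpoonup u$ in $W_0^{s,p}(M)$ then, along a subsequence, $u_n\to u$ in $L^q(M)$ and a.e.\ in $\Omega$ with a common $L^q$-dominant, so by continuity of $F$ and (f1), dominated convergence gives $\int_\Omega F(x,u_n)\to\int_\Omega F(x,u)$. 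Hence $J$ is weakly lower semicontinuous. Then take a minimising sequence $(u_n)$: by Step~1 it is bounded, and since $W_0^{s,p}(M)$ is reflexive (Section~\ref{sec2}) a subsequence satisfies $u_n\rightharpoonup u_\ast$; thus $J(u_\ast)\le\liminf_n J(u_n)=m$, so $J(u_\ast)=m$, $u_\ast$ is a global minimiser, $J'(u_\ast)=0$, and $u_\ast$ is a weak solution of \eqref{k1}.

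\emph{Step 3 (non-triviality), which I expect to be the main obstacle.} It remains to rule out $u_\ast\equiv0$, i.e.\ to show $m<0=J(0)$. The natural way is to test $J$ along a ray: fix $\phi\in C_0^\infty(\Omega)$ with $\phi\ge0$, $\phi\not\equiv0$, and study
\[
J(t\phi)=\frac{t^{p}}{p}\,\|\phi\|_{W^{s,p}(M)}^{p}-\int_{\Omega}F(x,t\phi)\,d\mu_g,\qquad t\to 0^{+},
\]
exploiting the behaviour of $f$ near the origin together with $1<q<p$ to make the integral term dominate, so that $J(t\phi)<0$ for $t$ small; this forces $m<0$ and hence $u_\ast\neq0$. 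I expect this to be the delicate step: one must control the sign and size of $F$ near $0$ (the interplay of (f1) and (f3)), and, because of the non-Euclidean features flagged in the introduction — e.g.\ that $[d_g(P,\cdot)]^2$ is merely Lipschitz on $M$ — some care is needed to verify that the chosen test functions are admissible and to estimate the Gagliardo seminorm on $(M,g)$.
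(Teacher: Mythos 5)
Your Steps 1 and 2 reproduce the paper's argument exactly: the paper establishes $C^1$-regularity of the energy functional, its weak lower semicontinuity, and the coercivity bound $\psi(u)\ge \tfrac1p\|u\|^{p}_{W_0^{s,p}(M)}-2\beta C_1^{q/p}\|u\|^{q}_{W_0^{s,p}(M)}-2\beta\mu(\Omega)$, and then concludes by the direct method that a global minimiser $u_0$ exists and is a weak solution. Up to that point the two arguments coincide, and you are also right to single out non-triviality as the real issue --- the paper in fact never addresses it and stops at ``it has a minimum point $u_0$\dots and $u_0$ is a weak solution.''

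The gap is in your Step 3: the mechanism you propose cannot work under the stated hypotheses. Condition (f3) gives, for every $\varepsilon>0$, a $\delta>0$ with $|f(x,\zeta)|\le\varepsilon|\zeta|^{p-1}$ for $|\zeta|<\delta$, hence $|F(x,t)|\le\tfrac{\varepsilon}{p}|t|^{p}$ near $t=0$; so for fixed $\phi$ one gets $J(t\phi)\ge \tfrac{t^{p}}{p}\|\phi\|^{p}_{W^{s,p}(M)}-\tfrac{\varepsilon}{p}t^{p}\|\phi\|^{p}_{L^{p}}>0$ for all small $t>0$ once $\varepsilon$ is chosen small (using Lemma \ref{lemma3} to absorb the $L^p$ term). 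The origin is a strict local minimum along every ray, so the integral term can never dominate as $t\to0^{+}$ and you cannot conclude $m<0$ this way. Worse, the non-triviality claim is not provable from (f1)--(f3) at all: $f\equiv0$ satisfies all three conditions, and then testing the weak formulation with $v=u$ gives $[u]_{W^{s,p}(M)}^{p}+\|u\|_{L^{p}(M)}^{p}=0$, i.e.\ $u\equiv0$ is the \emph{only} solution. Any honest proof of the theorem as stated must therefore either add a hypothesis (e.g.\ $F(x,t_0)>0$ on a set of positive measure for some $t_0$, which would give $J(t_0\phi)<0$ for a suitable cutoff $\phi$ and hence $m<0$) or drop the word ``non-trivial.'' Your write-up at least flags the problem; the paper silently skips it.
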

The energy functional in $ W_0^{s,p}(M)$ is defined by 
\begin{align*}
\psi(u)&=\frac{1}{p}\iint_{M\times M}\frac{\left| u(x)-u(y) \right|^{p}}{(d_g(x,y))^{N+ps}}d\mu  _g(x)
d\mu  _g(y)+   \frac{1}{p}   \int _{M} \left| u(x) \right|^{p} d\mu  _g(x)\\ 
&-\int _{\Omega} F(x, u(x)) d\mu  _g(x)\\
   &:= I_{1}(u)+ I_{2}(u)-K(u),
\end{align*}
where $\displaystyle F(x,t)=\int_0^t f(x,s)ds.$\\
The energy functional is $C ^1(W_0^{s,p}(M),\mathbb{R})$, as we will show in the following two lemmas.
 \begin{lemma}\label{lemma3-3}
Suppose that {\rm (f1)} is true,  then the functional $K\in   C ^1(W_0^{s,p}(M),\mathbb{R})$  and
\[
\langle K'(u),v\rangle=\int_\Omega f(x,u)vd\mu  _g(x), \quad
\text{for all } u,\ v\in W_0^{s,p}(\Omega).\]
\end{lemma}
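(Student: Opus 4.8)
The plan is to verify that the nonlinear term $K(u)=\int_\Omega F(x,u)\,d\mu_g(x)$ is continuously Fréchet differentiable on $W_0^{s,p}(M)$ with the claimed derivative, using the growth bound (f1) and the compact/continuous embeddings from Lemma \ref{lemma3}. First I would record that (f1) gives, after integrating in $t$, a bound of the form $|F(x,t)|\le \beta\bigl(|t|+\tfrac{1}{q}|t|^{q}\bigr)$, so that by the continuous embedding $W_0^{s,p}(M)\hookrightarrow L^q(M)$ (valid since $1<q<p_s^*$, and also $q>1$ so $L^q$ controls the linear term via H\"older on the bounded set $\Omega$) the functional $K$ is well defined and finite on all of $W_0^{s,p}(M)$.

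Next I would show Gateaux differentiability. Fix $u,v\in W_0^{s,p}(M)$ and consider $\tfrac{d}{dt}K(u+tv)$; the integrand $F(x,u+tv)$ has $t$-derivative $f(x,u+tv)\,v$, and (f1) bounds the difference quotient $\tfrac{F(x,u+tv)-F(x,u)}{t}$ by $\beta\bigl(|u|+|v|+ (|u|+|v|)^{q-1}\bigr)|v|$ for $|t|\le 1$, which lies in $L^1(\Omega)$ by H\"older (with exponents $q$ and $q'$, using $u,v\in L^q$ and $(|u|+|v|)^{q-1}\in L^{q'}$). Hence dominated convergence yields $\langle K'(u),v\rangle=\int_\Omega f(x,u)v\,d\mu_g(x)$, and the Nemytskii operator $u\mapsto f(\cdot,u)$ maps $L^q(\Omega)$ into $L^{q'}(\Omega)$ by (f1), so this linear functional is bounded on $W_0^{s,p}(M)$.

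Finally I would upgrade Gateaux to $C^1$ by showing $u\mapsto K'(u)$ is continuous from $W_0^{s,p}(M)$ into its dual. If $u_n\to u$ in $W_0^{s,p}(M)$ then $u_n\to u$ in $L^q(\Omega)$, and the standard continuity theorem for Nemytskii operators (since $f$ is Carath\'eodory with the subcritical growth (f1)) gives $f(\cdot,u_n)\to f(\cdot,u)$ in $L^{q'}(\Omega)$; then
\[
\|K'(u_n)-K'(u)\|_{*}=\sup_{\|v\|\le 1}\Bigl|\int_\Omega \bigl(f(x,u_n)-f(x,u)\bigr)v\,d\mu_g\Bigr|
\le C\,\|f(\cdot,u_n)-f(\cdot,u)\|_{L^{q'}(\Omega)}\to 0,
\]
using H\"older and the embedding constant $C$ from Lemma \ref{lemma3}. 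Combining boundedness of the Gateaux derivative with this continuity gives $K\in C^1$.

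The main obstacle is purely the justification of passing the derivative inside the integral and the $L^{q'}$-continuity of the Nemytskii operator; both hinge on producing an $L^1$, respectively $L^{q'}$, majorant from (f1), and on the fact that the embedding $W_0^{s,p}(M)\hookrightarrow L^q(M)$ supplied by Lemma \ref{lemma3} is available for every $q\in[p,p_s^*]$ — for $q\in(1,p)$ one still gets it on the bounded domain $\Omega$ by interpolating/H\"older against the $L^p$ bound, which is the one point where compactness of $M$ (finite volume) is used.
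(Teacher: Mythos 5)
Your proposal is correct and follows essentially the same route as the paper: a pointwise mean-value/difference-quotient argument with an $L^1$ majorant from (f1) and dominated convergence for the G\^ateaux derivative, then $L^{q'}$-convergence of $f(\cdot,u_n)$ plus H\"older and the embedding of Lemma \ref{lemma3} for continuity of $K'$. The only cosmetic difference is that you invoke the standard Nemytskii continuity theorem where the paper proves that step by hand (a.e.\ convergent subsequence, uniform $L^{q'}$ bound, Vitali's convergence theorem), and your closing remark about handling $q<p$ via the finite volume of $\Omega$ is a point the paper leaves implicit.
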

\begin{proof} Let $u,v\in W_0^{s,p}(M)$,  $x\in M $ and $0<t<1,$  we have
\begin{align*}
\frac{1}{t}(F(x,u+tv)-F(x,u))&= \frac{1}{t}\int_0^{u+tv} f(m,s)ds -\frac{1}{t} \int_0^{u} f(m,s)ds\\
&= \frac{1}{t}\int_u^{u+tv} f(m,s)ds.
\end{align*}
By  the mean value theorem, there exists  a $0<z<1$ such that
$$ \frac{1}{t}(F(x,u+tv)-F(x,u)) =f(x, u+ztv)v.$$
We use the {\rm (f1)} and   Young's inequality, we obtain 
\begin{align*} 
|f(x,u+ztv)v| 
&\leq \beta |1+ |u+tzv|^{q-1} |v|\\ 
&\leq\beta |\frac{1}{q} |v|^{q}+\frac{1}{q'}+ \frac{1}{q'}|u+tvz|^{q}+ \frac{1}{q}|v|^{q}|\\
&\leq\beta (2 |v|^{q}+1+2^{2}(|v|^{q}+|u|^{q}))  \\
&\leq\beta 2^{q+1}(1+|u|^{q}+ |v|^{q}),
\end{align*} 
where $q'$ is the conjugate of $q$. Lebesgue's dominated convergence Theorem implies
\begin{align*}
\lim_{t\to 0}\frac{1}{t}(K(u+tv)-K(u))
&=\lim_{t\to 0}\int_\Omega f(x,u+z tv)vd\mu  _g(x)\\
&=\int_\Omega\lim_{t\to 0}f(x,u+z tv)vd\mu  _g(x)\\
&=\int_\Omega f(x,u)vd\mu  _g(x).
\end{align*}
Let $u{_n}, u\in W_0^{s,p}(M)$  be such that $u{_n}\to u $ strongly
in $W_0^{s,p}(M)$ as $n\to\infty$.  According to  Lemma  \ref{lemma3},  there exists a subsequence of $\{u_n\}$ still denoted by $\{u_n\}$ such that $u_n\to u$ a.e. in $\Omega$. Since $ 1<q< p^*_s $, we use the Lemma \ref{lemma3}, H\"{o}lder's inequality and {\rm (f1)},  to get  
\begin{equation} \label{m1.3}
\begin{aligned}
\int_ \Omega |f(x,u_{n})|^{q'}d\mu  _g(x)
&\leq 2^{\frac{q+1}{q-1}}\beta^{\frac{q+1}{q-1}}
 \Big(\|u_{n}\|^{q}\|_{L^{\frac{p_s^*}{q}}(\Omega)}\|1\|_{L^{\frac{p_s^*}{p_s^*-q}}(\Omega)}
 +\mu(\Omega)\Big) \\
&\leq C(\mu(\Omega))^{\frac{p_s^*-q}{p_s^*}}+C\mu(\Omega)\leq C(q, \beta,\Omega),
\end{aligned}
\end{equation}
where $\mu (\Omega)$  denotes the volume of set $\Omega$ and  $q'$ is the conjugate of $q$. It follows from inequality 4 that, the sequence $ |f(x,u_n)-f(x,u)|^{q\prime} $ is uniformly bounded and equi-integrable in $L^{1}(\Omega)$. The Vitali convergence Theorem implies
$$ \label{k8}\lim_{n\to\infty}\int_\Omega |f(x,u_n)-f(x,u)|^{q\prime}d\mu  _g(x)=0.$$
Thus, by  Lemma \ref{lemma3} and   H\"{o}lder's inequality,   we have
\begin{align*}
\|K'(u_n)-K'(u)\| _{W_0^{s,p}(M)^{*}}&= \sup_{v\in W_0^{s,p}(\Omega),\, \|v\|_{W_0^{s,p}(\Omega)}\leq1}
\|\langle K'(u_n)-K'(u),v\rangle\|\\
& \leq \|f(x,u_n)-f(x,u)\|_{L^{q'}(\Omega)}\|v\|_{L^q(\Omega)}\\
  & \leq \|f(x,u_n)-f(x,u)\|_{L^{q'}(\Omega)}\to 0,
\end{align*}
as    $ n\to\infty$, where $ W_0^{s,p}(M)^{*}$ denotes the dual space of $ W_0^{s,p}(M).$
\end{proof}
\begin{lemma}
The functional $I _{1}+ I _{2}\in C^1(W_0^{s,p}(M),\mathbb{R})$ and
\begin{align*}
\langle( I _{1}+ I _{2})'(u),v\rangle &= \int _{M} |u(x)|^{p-2} u(x)v(x)d\mu  _g(x) \\
&+ \iint_{M\times M} \frac{\left| u(x)-u(y) \right|^{p-2}(u(x)-u(y))(v(x)-v(y))}{(d_g(x,y))^{N+ps}}d\mu  _g(x)d\mu  _g(y),
\end{align*}
for all $u, v\in W_0^{s,p}(M)$.
\end{lemma}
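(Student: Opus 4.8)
The plan is to show that $I_1+I_2$ is Gateaux differentiable with the asserted derivative, and then that this Gateaux derivative is continuous from $W_0^{s,p}(M)$ into its dual, which together yield $I_1+I_2\in C^1$. First I would treat $I_2(u)=\frac1p\int_M|u|^p\,d\mu_g$: the map $t\mapsto|u+tv|^p$ is differentiable with derivative $p|u+tv|^{p-2}(u+tv)v$, and one bounds this for $0<t<1$ by $C(|u|^{p-1}+|v|^{p-1})|v|$, which is integrable by Hölder (with exponents $p/(p-1)$ and $p$) since $u,v\in L^p(M)$ by Lemma~\ref{lemma3}. Dominated convergence then gives $\langle I_2'(u),v\rangle=\int_M|u|^{p-2}uv\,d\mu_g$. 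For $I_1$, the integrand in the difference quotient is $\frac1t\big(|U+tV|^p-|U|^p\big)$ where $U=\frac{u(x)-u(y)}{(d_g(x,y))^{(N+ps)/p}}$ and $V=\frac{v(x)-v(y)}{(d_g(x,y))^{(N+ps)/p}}$ on $M\times M$; the same elementary inequality bounds it by $C(|U|^{p-1}+|V|^{p-1})|V|\in L^1(M\times M)$ by Hölder and the fact that $[u]_{W^{s,p}}, [v]_{W^{s,p}}<\infty$. Dominated convergence again produces the stated double-integral formula.

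Next I would verify that $v\mapsto\langle(I_1+I_2)'(u),v\rangle$ is a bounded linear functional: linearity is clear, and boundedness follows from Hölder applied to each term, giving $|\langle(I_1+I_2)'(u),v\rangle|\le \|u\|_{L^p(M)}^{p-1}\|v\|_{L^p(M)}+[u]_{W^{s,p}(M)}^{p-1}[v]_{W^{s,p}(M)}\le C\|u\|_{W^{s,p}(M)}^{p-1}\|v\|_{W^{s,p}(M)}$, using Lemma~\ref{lemma3} to compare norms. So $(I_1+I_2)'(u)\in W_0^{s,p}(M)^*$.

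The main work, and the step I expect to be the real obstacle, is continuity of $u\mapsto(I_1+I_2)'(u)$. Suppose $u_n\to u$ strongly in $W_0^{s,p}(M)$. For the $I_2$ part, $|u_n|^{p-2}u_n\to|u|^{p-2}u$ in $L^{p'}(M)$ because $u_n\to u$ in $L^p(M)$ (by Lemma~\ref{lemma3}) and the Nemytskii map $t\mapsto|t|^{p-2}t$ is continuous $L^p\to L^{p'}$; then Hölder gives $\sup_{\|v\|\le1}|\langle I_2'(u_n)-I_2'(u),v\rangle|\to0$. For the $I_1$ part, set $g_n(x,y)=|U_n(x,y)|^{p-2}U_n(x,y)$ with $U_n$ as above; since $[u_n-u]_{W^{s,p}}\to0$ means $U_n\to U$ in $L^p(M\times M)$, again the continuity of the Nemytskii operator gives $g_n\to g$ in $L^{p'}(M\times M)$, and Hölder controls the supremum over $\|v\|\le1$ of the corresponding term by $\|g_n-g\|_{L^{p'}(M\times M)}\,[v]_{W^{s,p}(M)}\to0$. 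Combining the two estimates yields $\|(I_1+I_2)'(u_n)-(I_1+I_2)'(u)\|_{W_0^{s,p}(M)^*}\to0$, so the Gateaux derivative is continuous and hence the functional is $C^1$. The delicate point throughout is keeping the kernel weight $(d_g(x,y))^{-(N+ps)/p}$ bundled into $U,U_n$ so that all manipulations reduce to standard $L^p(M\times M)$ facts; the only genuinely non-Euclidean input needed is Lemma~\ref{lemma3}, which I would invoke whenever passing between the Gagliardo seminorm, the full $W^{s,p}$ norm, and the $L^q(M)$ norms.
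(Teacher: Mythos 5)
Your proposal is correct, and for the Gateaux-differentiability step it follows essentially the same route as the paper: the paper applies the mean value theorem to $y\mapsto|u(x)+tyv(x)|^{p}$ to rewrite the difference quotient as $|u(x)+ztv(x)|^{p-2}(u(x)+ztv(x))v(x)$ and then passes to the limit by dominated convergence, which is exactly your argument with the domination $C(|u|^{p-1}+|v|^{p-1})|v|$ made explicit (the paper leaves this bound implicit). Your treatment of $I_1$ by absorbing the kernel weight into $U,V\in L^p(M\times M)$ is also how the paper's ``similarly'' for the double integral should be read.

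The one genuine difference is that you go further than the paper on the $C^1$ claim. The paper's proof stops after checking, via H\"older, that $\langle(I_1+I_2)'(u),\cdot\rangle$ is a bounded linear functional, and then simply asserts $\psi\in C^{1}$; it never verifies that $u\mapsto(I_1+I_2)'(u)$ is continuous from $W_0^{s,p}(M)$ into the dual. Your final paragraph supplies precisely this missing step, using the continuity of the Nemytskii map $t\mapsto|t|^{p-2}t$ from $L^{p}$ to $L^{p'}$ (on $M$ and on $M\times M$) together with H\"older. So your argument is not just equivalent to the paper's but strictly more complete; the extra step is necessary for the stated conclusion and is correct as written.
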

 \begin{proof}
 Let $u, v \in  W_0^{s,p}(M)$  we have
 \begin{align*}
 \langle(  I _{2})'(u),v\rangle
 & = \lim_{t\to0}\frac{1}{t}( I _{2}(u+tv)- I _{2}(u))\\
 &=  \frac{1}{p}  \lim_{t\to 0}\frac{1}{t} \int _{M} (|u(x)+tv(x)|^{p} -|u(x)|^{p})d\mu  _g(x).
 \end{align*}
   We consider the function defined by  $K:[0,1]\to \mathbb{R}  $ as
   \[
\ K(y)=|u(x)+tyv(x)|^{p}.
\]
According to the mean value  Theorem, there exists  a $0<z<1$  such that $$\frac{1}{p} \frac{|u(x)+tv(x)|^{p} -|u(x)|^{p}}{t} = |u(x)+ztv(x)|^{p-2}(u(x)+ztv(x))v(x). $$
We  apply the mean value Theorem dominate, we get $$  \langle(  I _{2})'(u),v\rangle =  \int _{M} |u(x)|^{p-2} u(x)v(x)d\mu  _g(x).$$
  Similarly, we have  $$ \langle(  I _{1})'(u),v\rangle =\iint_{M\times M} \frac{\left| u(x)-u(y) \right|^{p-2}(u(x)-u(y))(v(x)
-v(y))}{(d_g(x,y))^{N+ps}}d\mu  _g(x)d\mu  _g(y)\cdot$$
Let $u, v \in  W_{0}^{s,p}(M)$, by H\"{o}lder's inequality, we have
\begin{align*}
 \langle( I _{1}+ I _{2})'(u),v\rangle&= \int _{M} |u(x)|^{p-2} u(x)v(x)d\mu  _g(x) \\&+\iint_{M\times M} \frac{|u(x)-u(y)|^{p-2}(u(x)-u(y))(v(x)-v(y))}{(d_g(x,y))^{N+ps}}d\mu  _g(x)d\mu  _g(y) \\
  &\leq \int _{M} |u(x)|^{p-1} v(x)d\mu  _g(x) \\
  &+ \iint_{M\times M} \frac{|u(x)-u(y)|^{p-1}(v(x)-v(y))}{(d_g(x,y))^{(N+ps)(\frac{p-1}{p})} (d_g(x,y))^{(N+ps)\frac{1}{p}}}d\mu  _g(x)d\mu  _g(y)\\
  &\leq \left( \iint_{M\times M} \frac{\left| u(x)-u(y) \right|^{p}}{(d_g(x,y))^{N+ps}}d\mu  _g(x)d\mu  _g(y)\right)^{\frac{p-1}{p}} \\&\times \left( \iint_{M\times M} \frac{\left| v(x)-v(y) \right|^{p}}{(d_g(x,y))^{N+ps}}d\mu  _g(x)d\mu  _g(y) \right)^{\frac{1}{p}} \\
  &+\left( \int _{M} |u(x)|^{p}d\mu  _g(x)\right)^{\frac{p-1}{p}} \left( \int _{M} |v(x)|^{p}d\mu  _g(x)\right)^{\frac{1}{p}}\cdot
\end{align*}
Finally, we obtain  $\psi \in  C^1(W_{0}^{s,p}(M),\mathbb{R}) $ and
\begin{align*}
 \langle( \psi )'(u),v\rangle&= \int _{M} |u(x)|^{p-2} u(x)v(x)d\mu  _g(x)\\
& + \iint_{M\times M} \frac{|u(x)-u(y)|^{p-2}(u(x)-u(y))(v(x)-v(y))}{(d_g(x,y))^{N+ps}}d\mu  _g(x)d\mu  _g(y)\cdot
\end{align*}
\end{proof}
Now, we will show that the energy functional $\psi$ is weakly lower semi-continuous, and coercive. 
\begin{lemma}

Assume {\rm (f1)} holds. Then the functional $\psi$ is weakly lower semi-continuous.
\end{lemma}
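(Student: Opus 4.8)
The plan is to write $\psi=(I_1+I_2)-K$ and handle the two summands by different mechanisms: I would show that $I_1+I_2$ is (sequentially) weakly lower semi-continuous because it is convex and strongly continuous, and that $K$ is weakly continuous because of a compact Sobolev embedding combined with the subcritical growth~{\rm (f1)}. So fix a sequence $u_n\rightharpoonup u$ in $W_0^{s,p}(M)$; it is then norm-bounded.

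First, by the definition of the norm of $W^{s,p}(M)$ one has $I_1(u)+I_2(u)=\frac1p\|u\|_{W^{s,p}(M)}^p$. For each fixed pair $(x,y)$ the maps $v\mapsto|v(x)-v(y)|^p$ and $v\mapsto|v(x)|^p$ are convex, and integration against the positive measures $d_g(x,y)^{-(N+ps)}\,d\mu_g(x)\,d\mu_g(y)$ and $d\mu_g(x)$ preserves convexity, so $I_1+I_2$ is convex; it is moreover continuous for the strong topology, being a continuous function of the norm. Since a convex, strongly lower semi-continuous functional on a Banach space is sequentially weakly lower semi-continuous, this gives $(I_1+I_2)(u)\le\liminf_{n\to\infty}(I_1+I_2)(u_n)$.

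Next, for $K(u)=\int_\Omega F(x,u)\,d\mu_g$ I would use that, $M$ being compact and $q<p_s^*$, the embedding $W_0^{s,p}(M)\hookrightarrow L^q(M)$ is compact --- a Rellich--Kondrachov-type fact, provable by localizing through a finite atlas with a subordinate partition of unity and applying the Euclidean compact fractional Sobolev embedding in each chart, Lemma~\ref{lemma3} already supplying the continuous embedding. Then, up to a subsequence, $u_n\to u$ strongly in $L^q(M)$, and, up to a further subsequence, $u_n\to u$ a.e.\ in $\Omega$ with $|u_n|\le w$ for some $w\in L^q(\Omega)$. From~{\rm (f1)} one has $|F(x,t)|\le\beta\big(|t|+\tfrac1q|t|^q\big)$, hence $|F(x,u_n)|\le\beta\big(w+\tfrac1q w^q\big)\in L^1(\Omega)$, so dominated convergence (or the Vitali theorem, exactly as in the proof of Lemma~\ref{lemma3-3}) gives $K(u_n)\to K(u)$ along the subsequence; since the limit is independent of the subsequence, the subsequence principle yields $K(u_n)\to K(u)$ for the whole sequence.

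Finally I would combine the two: since $K(u_n)$ converges, $\liminf_n\psi(u_n)=\liminf_n(I_1+I_2)(u_n)-\lim_n K(u_n)$, whence
$$\liminf_{n\to\infty}\psi(u_n)=\liminf_{n\to\infty}(I_1+I_2)(u_n)-\lim_{n\to\infty}K(u_n)\ \ge\ (I_1+I_2)(u)-K(u)=\psi(u),$$
which is the claimed weak lower semi-continuity of $\psi$. The only genuinely delicate point is the compactness of $W_0^{s,p}(M)\hookrightarrow L^q(M)$ for subcritical $q$ on the manifold --- this is where compactness of $M$ and the strict inequality $q<p_s^*$ are used --- and I expect it to be the main obstacle; everything else is routine once it is in hand.
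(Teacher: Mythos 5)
Your proposal is correct and follows essentially the same route as the paper: the decomposition $\psi=(I_1+I_2)-K$, weak lower semi-continuity of $I_1+I_2$ via convexity, and weak continuity of $K$ via strong $L^q$ convergence, a.e.\ convergence, and a dominated/Vitali convergence argument as in Lemma~\ref{lemma3-3}. Your one genuine improvement is that you explicitly flag and sketch the compactness of the embedding $W_0^{s,p}(M)\hookrightarrow L^q(M)$ for subcritical $q$, which the paper uses silently even though its Lemma~\ref{lemma3} only records the continuous embedding.
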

\begin{proof} Let $ \{ u_{n} \}\subset W_{0}^{s,p}(M)$, such that   $u_{n}\rightharpoonup u$ weakly in $W_0^{s,p}(M)$ as   $n\to\infty.$ Because $ I_{1}+I_{2}$ is convex functional, we concluded that the following inequality holds $$ (I_{1}+I_{2})( u_{n} )> (I_{1}+I_{2})( u )+  \langle((I_{1}+I_{2})'( u ),  u_{n}-u) \rangle.$$
Then we get that $( (I_{1}+I_{2})( u )\leq \liminf_{n\to\infty} (I_{1}+I_{2})( u _{n}).$\\
Since   $u_{n}\rightharpoonup u$ weakly in $W_{0}^{s,p}(M),$ we get that $u_{n}\to u$
strongly in $L^{q}(\Omega).$ Without loss of generality, we assume that  $u_{n}\to u$  a.e in M. Similar to the proof of  the Lemma \ref{lemma3-3}, we obtain $$\lim_{n\to\infty}\int_\Omega F(x,u_{n})d\mu  _g(x)=\int_\Omega F(x,u)d\mu  _g(x).$$ As a result,  $\psi$ is weakly lower semi-continuous in $ W_{0}^{s,p}(M).$
 \end{proof}
 Finally, by applying condition {\rm (f1)}, we can get $|F(x,z)|<2\beta(1+|z|^ {q}),$ and by applying  Lemma \ref{lemma3}, we can get 
 \begin{align*}
 \psi(u)&=\frac{1}{p}\iint_{M\times M}\frac{\left| u(x)-u(y) \right|^{p}}{(d_g(x,y))^{N+ps}}d\mu  _g(x)
d\mu  _g(y)+   \frac{1}{p}   \int _{M} \left| u(x) \right|^{p} d\mu  _g(x)\\
&- \int _{\Omega} F(x, u(x)) d\mu  _g(x) \\
&
 \geq \frac{1}{p}\iint_{M\times M}\frac{\left| u(x)-u(y) \right|^{p}}{(d_g(x,y))^{N+ps}}d\mu  _g(x)d\mu  _g(y)-\int _{\Omega} F(x, u(x)) d\mu_g(x) \\
 &\geq \frac{1}{p}\|u\|^{p}_{W_{0}^{s,p}(M)}-2\beta C_1^{\frac{q}{p}}\|u\|^{q}_{W_0^{s,p}(M)}
 -2\beta\mu(\Omega).
 \end{align*}
Since $q<p$, we have $ \psi(u)\to\infty $ as $ \|u\|_{W_{0}^{s,p}(M)}\to\infty.$  
Since $\psi$ is weakly lower semi-continuous, it has a minimum point $u_{0}$ in $ W_{0}^{s,p}(M)$, and $u_{0}$ is a weak solution of problem \eqref{k1}. This completes the proof of Theorem \ref{k5}.\\
Now, we will prove our second result, the existence of a weak solution in the case $q\in (p,p_s^*).$ We will use the geometric Mountain Pass Theorem.
\begin{theorem}\label{th2}
     Let $f$    be a function satisfying conditions {\rm (f1)}--{\rm (f4)} then the problem  \eqref{k1}  has a weak solution for $ p<q<p_s^*.$
      \end{theorem}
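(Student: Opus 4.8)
The plan is to apply the Mountain Pass Theorem to the energy functional $\psi \in C^1(W_0^{s,p}(M),\mathbb{R})$ already shown to be of class $C^1$. To do this I would verify three ingredients: the mountain-pass geometry near $0$, the mountain-pass geometry far from $0$, and the Palais--Smale condition at the relevant level.

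\textbf{Step 1: Geometry near the origin.} Using (f1) and (f3), for every $\varepsilon>0$ there is $C_\varepsilon>0$ with $|F(x,t)|\le \tfrac{\varepsilon}{p}|t|^p + C_\varepsilon |t|^q$ for a.e.\ $x$ and all $t$. Plugging this into the expression for $\psi$ and using Lemma~\ref{lemma3} (parts (1) and (2)) to control $\|u\|_{L^p(M)}$ and $\|u\|_{L^q(M)}$ by $\|u\|_{W_0^{s,p}(M)}$, one gets
\begin{align*}
\psi(u) \ge \frac{1}{p}\|u\|_{W_0^{s,p}(M)}^p - \frac{\varepsilon}{p}C\|u\|_{W_0^{s,p}(M)}^p - C_\varepsilon C'\|u\|_{W_0^{s,p}(M)}^q .
\end{align*}
Choosing $\varepsilon$ small so the first two terms combine into a positive multiple of $\|u\|^p$, and recalling $q>p$, there exist $\rho>0$ and $\alpha>0$ with $\psi(u)\ge \alpha$ whenever $\|u\|_{W_0^{s,p}(M)}=\rho$. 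Also $\psi(0)=0$.

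\textbf{Step 2: Geometry at infinity.} This is where (f4) (the Ambrosetti--Rabinowitz condition) is used. Integrating $0<\mu F(x,t)\le t f(x,t)$ gives $F(x,t)\ge c_1 |t|^\mu - c_2$ for $t>0$ (and one handles the full line via (f1) for $t\le 0$), hence for a fixed $w\in W_0^{s,p}(M)$ with $w>0$, $\|w\|=1$, and $\tau>0$ large,
\begin{align*}
\psi(\tau w) \le \frac{\tau^p}{p}\|w\|_{W_0^{s,p}(M)}^p + \frac{\tau^p}{p}\|w\|_{L^p(M)}^p - c_1\tau^\mu\|w\|_{L^\mu(\Omega)}^\mu + c_2\mu(\Omega) \to -\infty
\end{align*}
since $\mu>p$. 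So there is $e=\tau w$ with $\|e\|>\rho$ and $\psi(e)<0=\psi(0)$.

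\textbf{Step 3: Palais--Smale.} Let $\{u_n\}$ satisfy $\psi(u_n)\to c$ and $\psi'(u_n)\to 0$. Forming $\psi(u_n) - \tfrac{1}{\mu}\langle\psi'(u_n),u_n\rangle$ and using (f4) to kill the nonlinear term yields $\big(\tfrac{1}{p}-\tfrac{1}{\mu}\big)\|u_n\|_{W_0^{s,p}(M)}^p \le C + o(1)\|u_n\|_{W_0^{s,p}(M)}$, so $\{u_n\}$ is bounded; by reflexivity of $W_0^{s,p}(M)$ and Lemma~\ref{lemma3}, up to a subsequence $u_n\rightharpoonup u$ weakly and $u_n\to u$ strongly in $L^q(\Omega)$ and a.e.\ in $\Omega$. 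The compact embedding together with (f1) gives $\int_\Omega f(x,u_n)(u_n-u)\,d\mu_g\to 0$, and $\langle\psi'(u_n),u_n-u\rangle\to 0$; subtracting, the monotonicity of the fractional $p$-Laplacian operator (the standard $(S_+)$-type argument: $\langle A(u_n)-A(u),u_n-u\rangle\to0$ with $A$ the strictly monotone operator $I_1'+I_2'$) forces $u_n\to u$ strongly in $W_0^{s,p}(M)$. Then the Mountain Pass Theorem provides a critical point $u$ at level $c\ge\alpha>0$, which is therefore nontrivial and, by the definition of weak solution, solves \eqref{k1}.

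\textbf{Main obstacle.} The delicate point is Step 3, specifically upgrading weak convergence to strong convergence: one must establish the $(S_+)$ property of the operator $u\mapsto (I_1+I_2)'(u)$ on the fractional Sobolev space over the manifold, i.e.\ that $u_n\rightharpoonup u$ together with $\limsup_n\langle (I_1+I_2)'(u_n),u_n-u\rangle\le 0$ implies $u_n\to u$. In the Euclidean setting this follows from the Gagliardo structure and standard Brezis--Lieb / Simon inequalities; here one must check these manipulations survive on $(M,g)$, using that $d_g$ is a genuine distance and $d\mu_g$ behaves like Lebesgue measure in charts, and being careful that all the double integrals over $M\times M$ are handled via the semi-norm equivalences in Lemma~\ref{lemma3}. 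The geometric conditions (Steps 1 and 2) are comparatively routine once (f1), (f3) and (f4) are in hand.
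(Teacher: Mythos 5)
Your proposal follows essentially the same route as the paper, which establishes the mountain-pass geometry near the origin from (f1) and (f3) (Lemma~\ref{3.8}), the negativity of $\psi$ along a ray via the Ambrosetti--Rabinowitz condition (f4) (Lemma~\ref{3.9}), and the Palais--Smale condition by the same boundedness-plus-$(S_+)$ argument you outline (Lemma~\ref{3.7}), before invoking the Mountain Pass Theorem. The point you flag as delicate --- upgrading weak to strong convergence of the Palais--Smale sequence --- is exactly what the paper handles through its explicit computation showing the quantity $B_n$ (the pairing of the difference of the Gagliardo semi-norm terms with $u_n-u$) tends to zero.
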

We must prove the following Lemmas in order to prove Theorem 3.

\begin{lemma}\label{3.7}
 The functional $\psi$ satisfies the Palais-Smale condition.
 \end{lemma}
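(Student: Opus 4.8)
The plan is to run the classical compactness argument for the Palais--Smale condition, adapted to the nonlocal, non-Euclidean setting. Let $\{u_n\}\subset W_0^{s,p}(M)$ be a Palais--Smale sequence at level $c$, so that $\psi(u_n)\to c$ and $\psi'(u_n)\to0$ in $W_0^{s,p}(M)^*$. The first step is to prove that $\{u_n\}$ is bounded in $W_0^{s,p}(M)$. For $n$ large one has $\psi(u_n)\le c+1$ and $|\langle\psi'(u_n),u_n\rangle|\le\|u_n\|_{W_0^{s,p}(M)}$, and from the identity
\[
\psi(u_n)-\tfrac1\mu\langle\psi'(u_n),u_n\rangle=\big(\tfrac1p-\tfrac1\mu\big)\|u_n\|_{W_0^{s,p}(M)}^p+\int_\Omega\big(\tfrac1\mu f(x,u_n)u_n-F(x,u_n)\big)d\mu_g
\]
the Ambrosetti--Rabinowitz condition (f4) shows the last integral is bounded below by a constant independent of $n$ (the contribution of the set $\{u_n\le0\}$ being absorbed via (f1) and the finiteness of $\mu(\Omega)$). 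Hence $\big(\tfrac1p-\tfrac1\mu\big)\|u_n\|_{W_0^{s,p}(M)}^p\le c+1+\|u_n\|_{W_0^{s,p}(M)}+C$, and since $\mu>p>1$ this forces $\sup_n\|u_n\|_{W_0^{s,p}(M)}<\infty$.

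In the second step, by reflexivity of $W_0^{s,p}(M)$ a subsequence (not relabelled) satisfies $u_n\rightharpoonup u$ in $W_0^{s,p}(M)$, and by the compact fractional Sobolev embedding $W_0^{s,p}(M)\hookrightarrow\hookrightarrow L^q(\Omega)$ for $1\le q<p^*_s$ (Lemma~\ref{lemma3}), also $u_n\to u$ strongly in $L^q(\Omega)$ and a.e.\ in $\Omega$. Let $A\colon W_0^{s,p}(M)\to W_0^{s,p}(M)^*$ denote the operator given by the first two terms of $\psi'$, namely
\begin{align*}
\langle A(w),v\rangle&=\iint_{M\times M}\frac{|w(x)-w(y)|^{p-2}(w(x)-w(y))(v(x)-v(y))}{(d_g(x,y))^{N+ps}}d\mu_g(x)d\mu_g(y)\\
&\quad+\int_M|w(x)|^{p-2}w(x)v(x)d\mu_g(x),
\end{align*}
so that $\psi'(w)=A(w)-K'(w)$. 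Choosing $v=u_n-u$: we have $\langle\psi'(u_n),u_n-u\rangle\to0$ because $\psi'(u_n)\to0$ and $\{u_n-u\}$ is bounded; $\int_\Omega f(x,u_n)(u_n-u)d\mu_g\to0$ by H\"older's inequality, the uniform bound on $\|f(\cdot,u_n)\|_{L^{q'}(\Omega)}$ obtained exactly as in \eqref{m1.3}, and $\|u_n-u\|_{L^q(\Omega)}\to0$; and $\langle A(u),u_n-u\rangle\to0$ since $A(u)$ is a fixed element of $W_0^{s,p}(M)^*$. Combining these, $\langle A(u_n)-A(u),u_n-u\rangle\to0$.

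The third step upgrades this to strong convergence $u_n\to u$ in $W_0^{s,p}(M)$, i.e.\ exploits the $(S_+)$-property of $A$. I would use the classical Simon inequalities: there is a constant $C_p>0$ such that for all $a,b\in\mathbb{R}$,
\begin{align*}
|a-b|^p&\le C_p\big(|a|^{p-2}a-|b|^{p-2}b\big)(a-b) && (p\ge2),\\
|a-b|^p&\le C_p\Big[\big(|a|^{p-2}a-|b|^{p-2}b\big)(a-b)\Big]^{p/2}\big(|a|^p+|b|^p\big)^{(2-p)/2} && (1<p<2).
\end{align*}
Applying these pointwise with $a=u_n(x)-u_n(y)$, $b=u(x)-u(y)$ (weighted by $(d_g(x,y))^{-(N+ps)}$ and integrated over $M\times M$) and with $a=u_n(x)$, $b=u(x)$ (integrated over $M$): if $p\ge2$ one obtains at once $\|u_n-u\|_{W_0^{s,p}(M)}^p\le C_p\langle A(u_n)-A(u),u_n-u\rangle\to0$; if $1<p<2$ one applies H\"older's inequality with exponents $2/p$ and $2/(2-p)$ and uses the uniform bounds on $\|u_n\|_{W_0^{s,p}(M)}$ and $\|u\|_{W_0^{s,p}(M)}$ to arrive at the same conclusion. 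Therefore $u_n\to u$ strongly in $W_0^{s,p}(M)$, which is exactly the Palais--Smale condition.

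The step I expect to be the main obstacle is the last one, the $(S_+)$-type convergence: the Gagliardo double integral and the $L^p$ term must be handled simultaneously, and the subquadratic range $1<p<2$ forces the second Simon inequality together with the additional H\"older estimate. A secondary, more technical point occurs in the boundedness step, where (f4) is assumed only for $t>0$, so one must either invoke the usual extension of the Ambrosetti--Rabinowitz condition to all $t\in\mathbb{R}$ or control the negative part of $u_n$ separately via (f1) and the boundedness of $\Omega$.
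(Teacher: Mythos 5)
Your proposal is correct, and its first two steps (boundedness via the Ambrosetti--Rabinowitz condition, then weak convergence plus the compact embedding and the observation that $\langle A(u_n)-A(u),u_n-u\rangle\to 0$) coincide with the paper's argument; indeed you are more careful than the paper, which attributes the lower bound on $\frac1\mu\int f(x,u_n)u_n-\int F(x,u_n)$ to (f1) rather than (f4) and ignores the restriction of (f4) to $t>0$. Where you genuinely diverge is the final compactness step. The paper works with the Gagliardo seminorms themselves: it bounds $([u_n]_{s,p}^{p-1}-[u]_{s,p}^{p-1})([u_n]_{s,p}-[u]_{s,p})$ by the quantity $B_n$ (the difference of the nonlocal monotone forms tested against $u_n-u$) via H\"older applied to the cross terms, shows $B_n\to0$, and thereby obtains $[u_n]_{s,p}\to[u]_{s,p}$; strong convergence then follows from norm convergence together with weak convergence in a uniformly convex space (a Radon--Riesz argument that the paper leaves implicit). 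You instead prove the $(S_+)$ property of the operator $A$ directly through Simon's pointwise inequalities, splitting the cases $p\ge2$ and $1<p<2$. Your route is more self-contained and more robust: it does not depend on identifying the limit of the seminorms or on invoking uniform convexity, and it makes explicit the extra H\"older step needed in the subquadratic range, which the paper's seminorm argument silently bypasses. The paper's route is shorter on the page but leaves both the Radon--Riesz conclusion and the case distinction in $p$ unaddressed. Both arguments are valid; yours fills in precisely the points the paper glosses over.
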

 \begin{proof}

  Let $u_{n}   \subset W_0^{s,p}(M) $  be such that $\psi(u_{n}) \to c $ and $\psi'(u_{n}) \to 0$ in   $W_0^{s,p}(M)^{*}$ as $n\to \infty,$  so for n large  we have $c+1+ \|u\|_{W_{0}^{s,p}(M)}\geq \psi(u_{n})-\frac{1}{\mu}|\langle\psi'(u_{n}), u_{n} \rangle|$  with $\mu >0.$
 By assumption {\rm (f1)} yields 
 \begin{align*}
  c+1+ \|u_n\|_{W_{0}^{s,p}(M)} & \geq \psi(u_{n})-\frac{1}{\mu}|\langle\psi'(u_{n}), u_{n} \rangle|
  \\&= \frac{1}{p}  \|u_n\|_{W_0^{s,p}(M)}^p + \frac{1}{p}\|u_n\|_{L^{p}(M)}^p-\int _{\Omega} F(x, u_{n}(x))d\mu  _g(x)\\
  & - \frac{1}{\mu}(\|u_n\|_{W_0^{s,p}(M)}^p + \|u_n\|_{L^{p}(M)}^p)+ \frac{1}{\mu} \int _{\Omega} f(x, u_{n}(x))u_{n}(x) d\mu  _g(x)\\
   & \geq (  \frac{1}{p}- \frac{1}{\mu}) \|u_n\|_{W_{0}^{s,p}(M)}^p.
 \end{align*}
 Since $ W_0^{s,p}(M)$ is uniformly convex space,  then there is a subsequence  that will be noted  as $(u_n)$  such that $u_{n}\rightharpoonup u$ weakly in $W_{0}^{s,p}(M)$.
 We use the   H\"{o}lder's inequality, we have
 \begingroup\makeatletter\def\f@size{11.0}\check@mathfonts
\begin{align*}
 &( [u_{n}]_{s,p}^{p-1}-[u]_{s,p}^{p-1})([u_{n}]_{s,p}-[u]_{s,p})=[u_{n}]_{s,p}^{p}+[u]_{s,p}^{p}-[u_{n}]_{s,p}^{p-1}[u]_{s,p}- [u]_{s,p}^{p-1}[u_{n}]_{{s,p}}\\
&\leq \iint_{M \times M}\frac{\left| u_{n}(x)-u_{n}(y) \right|^{p}}{(d_g(x,y))^{N+ps}} d\mu_ g(x)d\mu_g(y) \\&+  \iint_{M \times M}\frac{\left| u(x)-u(y) \right|^{p}}{(d_ g(x,y))^{N+ps}} d\mu _ g(x)d\mu_g(y)\\
 &-  \iint_{M\times M} \frac{|u_n(x)-u_n(y)|^{p}(u_n(x)-u_n(y))(u(x)-u(y))}{(d_g(x,y))^{N+ps}}d\mu  _g(x)d\mu  _g(y)\\
 &- \iint_{M\times M} \frac{|u(x)-u(y)|^{p}(u(x)-u(y))(u_n(x)-u_n(y))}{(d_g(x,y))^{N+ps}}d\mu  _g(x)d\mu  _g(y)\\
& = B _{n}.
 \end{align*}
 \endgroup
  Now, we will show that $B_n \to 0$  as $n \to \infty$. We have,
  \begin{align*}
   \langle(  \psi)'(u_{n})-\psi)'(u),  u_{n}-u\rangle&= B_{n}+\int _{M} (u_{n}-u) (|u_{n}(x)|^{p-2}u_{n}- |u(x)|^{p-2}u)d\mu  _g(x)\\
   & +\int _{M} (u_{n}-u)(f(x,u_{n})-f(x,u))d\mu  _g(x).
  \end{align*}
   Since $ u_{n}\rightharpoonup u$  as $n \to \infty$  in $W_0^{s,p}(M)$.  According to  Lemma 1, we have $u_{n} \to  u $ strongly  in $L^{q}(M), $ for all $ q\in [p, p^*_s[, $ and $ u_{n} \to  u $ a.e in M. By Theorem 3.32 \cite{aubin}, there exists a sub-sequence noted by $u_{n},   h_{1}\in  L^{q}(M)$ and  $h_{2}\in  L^{q}(M)$ such that $$ |u_{n}(x)|\leq h_{1}(x),  |u(x)|\leq h_{2}(x) \text{    a.e in } M, $$ 
 for all $ q\in [p, p^*_s[. $  From   H\"{o}lder's inequality, we  get
\begingroup\makeatletter\def\f@size{10.1}\check@mathfonts
\begin{align*}
  & \int _{M}(u_{n}-u)(|u_{n}(x)|^{p-2}u_{n}-|u(x)|^{p-2}u)d\mu  _g(x)\\& \leq  \int _{M}(u_{n}-u)|u_{n}(x)|^{p-1}d\mu  _g(x)
   + \int _{M}(u_{n}-u)|u_(x)|^{p-1}d\mu  _g(x)\\
   & \leq  \int _{M}|u_{n}-u| |h_{1}|^{p-1}d\mu  _g(x)+\int _{M}|u_{n}-u||h_{2}|^{p-1}d\mu  _g(x)\\
  & \leq  \|u_{n}-u\|_{L^{p}(M)}(\| h_{1}\|_{L^{p}(M)}^{p-1}+ \|h_{2}\|_{L^{p}(M)}^{p-1}).
    \end{align*}
    \endgroup
     We also apply H\"{o}lder's inequality and the assumption {\rm (f1)} we obtained
     \begingroup\makeatletter\def\f@size{12.1}\check@mathfonts
\begin{align*}
& |\int _{M}(f(x,u_{n})- f(x,u))(u_{n}-u) d\mu  _g(x)| \\&  \leq \int _{M}|u_{n}-u| |f(x,u_{n})| d\mu  _g(x)+\int _{M}|u_{n}-u| |f(x,u)| d\mu _g(x)\\ 
 &\leq\int_{M}\beta|u_{n}-u|d\mu _g(x)+\int_{M}  |h_{1}|^{q-1} \beta|u_{n}-u|d\mu _g(x))\\
 &+\int _{M}  |h_{2}|^{q-1} \beta |u_{n}-u|d\mu _g(x)+\int _{M} \beta|u_{n}-u|d\mu  _g(x)\\
  & \leq  \beta(\|h_{1}\|_ {L^{q}(M)}^{q-1} +  \|h_{2}\|_ {L^{q}(M)}^{q-1})\|u_{n}-u\|_ {L^{q}(M)}\\
  &+  2\beta \|1\|_ {L^{\frac{p}{p-1}}(M)}\|u_{n}-u\|_ {L^{p}(M)}.
\end{align*}
\endgroup 
    
Since $u_{n}\rightharpoonup u$  as $n \to \infty,$  we have $B_{n} \to 0$ as $n\to \infty.$ Finally, we find that $ u_{n} \to u $ strongly in $ {W_0^{s,p}(M)}.$
  \end{proof}
  \begin{lemma}\label{3.8} Let   $f$   be a function satisfying  the conditions {\rm (f1)} and {\rm (f3)}. Then there are two positive real numbers, $a$ and $b,$ such that $\psi (u)\leq a$  and $ \|u\|_{W_0^{s,p}(M)}=b,$  for all $ u\in {W_0^{s,p}(M)} $ and $p<q<p_s^*.$
    \end {lemma}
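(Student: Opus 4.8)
The plan is to establish the mountain--pass geometry near the origin that the statement encodes: to produce $a,b>0$ such that $\psi(u)\ge a$ whenever $\|u\|_{W_0^{s,p}(M)}=b$. The first step is to merge {\rm (f1)} and {\rm (f3)} into a single pointwise growth estimate for $f$. By {\rm (f3)}, for every $\varepsilon>0$ there is $\delta=\delta(\varepsilon)>0$ with $|f(x,t)|\le\varepsilon|t|^{p-1}$ for a.e.\ $x\in\Omega$ and $|t|\le\delta$; for $|t|>\delta$, {\rm (f1)} gives $|f(x,t)|\le\beta(1+|t|^{q-1})\le\beta(\delta^{1-q}+1)|t|^{q-1}$. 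Hence, with $C_\varepsilon:=\beta(\delta^{1-q}+1)$,
\begin{equation*}
|f(x,t)|\le\varepsilon|t|^{p-1}+C_\varepsilon|t|^{q-1}\qquad\text{for a.e. }x\in\Omega,\ t\in\mathbb{R},
\end{equation*}
and integrating this in the $t$ variable,
\begin{equation*}
|F(x,t)|\le\frac{\varepsilon}{p}|t|^{p}+\frac{C_\varepsilon}{q}|t|^{q}.
\end{equation*}

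Next I would feed this into the energy functional. By the definition of the norm one has $I_1(u)+I_2(u)=\tfrac1p\|u\|_{W_0^{s,p}(M)}^p$, so using the $F$--bound and $\Omega\subseteq M$,
\begin{equation*}
\psi(u)\ge\frac1p\|u\|_{W_0^{s,p}(M)}^p-\frac{\varepsilon}{p}\|u\|_{L^p(M)}^p-\frac{C_\varepsilon}{q}\|u\|_{L^q(M)}^q.
\end{equation*}
Since $p\le q<p_s^*$, Lemma \ref{lemma3} yields $\|u\|_{L^p(M)}^p\le\|u\|_{W_0^{s,p}(M)}^p$ and $\|u\|_{L^q(M)}^q\le C_1^{q/p}\|u\|_{W_0^{s,p}(M)}^q$, whence
\begin{align*}
\psi(u)&\ge\frac1p\|u\|_{W_0^{s,p}(M)}^p-\frac{\varepsilon}{p}\|u\|_{W_0^{s,p}(M)}^p-\frac{C_\varepsilon C_1^{q/p}}{q}\|u\|_{W_0^{s,p}(M)}^q\\
&=\frac{1-\varepsilon}{p}\|u\|_{W_0^{s,p}(M)}^p-C\|u\|_{W_0^{s,p}(M)}^q,
\end{align*}
with $C:=C_\varepsilon C_1^{q/p}/q$.

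Finally, fix $\varepsilon=\tfrac12$. On the sphere $\|u\|_{W_0^{s,p}(M)}=b$ the last inequality becomes $\psi(u)\ge\bigl(\tfrac{1}{2p}-Cb^{q-p}\bigr)b^p$; since $q>p$, the factor $\tfrac{1}{2p}-Cb^{q-p}$ is positive for all sufficiently small $b>0$, so choosing such a $b$ and setting $a:=\bigl(\tfrac{1}{2p}-Cb^{q-p}\bigr)b^p>0$ proves the lemma. I do not anticipate a genuine obstacle here; the only points requiring care are the passage from the asymptotic hypotheses {\rm (f1)}, {\rm (f3)} to the uniform ``$\varepsilon|t|^{p-1}+C_\varepsilon|t|^{q-1}$'' estimate, and the correct bookkeeping of the embedding constant $C_1$ from Lemma \ref{lemma3}. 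Everything else is the elementary fact that, with $q>p$, the $\|u\|^q$ term is negligible compared with the $\|u\|^p$ term as $\|u\|\to0$.
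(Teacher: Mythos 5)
Your proposal is correct and follows essentially the same route as the paper: combine {\rm (f1)} and {\rm (f3)} into the estimate $F(x,t)\le \varepsilon|t|^{p}+C_\varepsilon|t|^{q}$, insert it into $\psi$, apply the embeddings of Lemma \ref{lemma3}, and choose the radius $b$ small (the paper instead fixes $\varepsilon$ in terms of the embedding constant, but the conclusion is the same). Your write-up is in fact more careful than the paper's on two points: you actually derive the combined growth bound rather than asserting it, and you correctly prove the inequality $\psi(u)\ge a$ on the sphere $\|u\|_{W_0^{s,p}(M)}=b$, which is what the mountain-pass geometry requires (the ``$\psi(u)\le a$'' in the lemma statement is evidently a misprint).
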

 \begin{proof}
   Let's combine the two conditions {\rm (f1)} and {\rm (f3)}. There exists   a  $ C > 0 $ such that
$$ F(x,t) \leq  \varepsilon |t|^{p}+C |t|^{q},  \text{ for all } \varepsilon>0. $$
  As a result, we have       \begin{align*}
  \psi (u)&= \frac{1}{p}  \iint_{M \times M}\frac{|u(x)-u(y)|^{p}}{(d_ g(x,y))^{N+ps}} d\mu _ g(x)d\mu_g(y) +\frac{1}{p} \int_{\Omega} \frac{|u(x)|^{p}}{(d_ g(x,y))^{N+ps}} d\mu _ g(x) \\
  &-  \int_{\Omega}F(x, u(x))d\mu _ g(x)\\
  & \geq \frac{1}{p} \|u\|^p_{W_0^{s,p}(M)}- \varepsilon C_{1}(p,q)\|u\|^p_{W_0^{s,p}(M)}- C C_{1}(p,q)\|u\|^q_{W_0^{s,p}(M)}.
  \end{align*}
If we take $ \varepsilon =\frac{1}{2p C_{1}},$ we have
 \begin{align*}
 \psi (u)& \geq \frac{1}{2p} \|u\|^p_{W_0^{s,p}(M)}-C C_{1}(q,p)\|u\|^q_{W_0^{s,p}(M)}\\
 &= \|u\|^p_{W_0^{s,p}(M)}(\frac{1}{2p}-C C_{1}(q,p)\|u\|^{q-p}_{W_0^{s,p}(M)})\\
 &= b^{p} (\frac{1}{2p}-C C_{1}(q,p)b^{q-p})=:a.
 \end{align*}
\end{proof}
\begin{lemma}\label{3.9}
   If $f$   satisfying conditions {\rm (f1)} and {\rm (f4)}, if   $p<q<p_s^*,$   then there exists a  $v\in  W_0^{s,p}(M)$ such that $ \|v\|_{W_0^{s,p}(M)} \geq b $  and  $\psi (v) < 0.$  
 \end{lemma}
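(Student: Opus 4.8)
\emph{Proof idea.} This is the second geometric condition required by the Mountain Pass Theorem, and the plan is to exhibit a ray along which $\psi$ tends to $-\infty$. The mechanism is the Ambrosetti--Rabinowitz condition {\rm (f4)}, which forces $F(x,\cdot)$ to grow at least like $t^{\mu}$ with $\mu>p$, whereas $I_{1}+I_{2}$ grows only like $t^{p}$. Concretely, fix once and for all a function $w\in C_{0}^{\infty}(\Omega)$ with $w\geq 0$ and $w\not\equiv 0$ (nonnegativity is essential, since {\rm (f4)} only provides information for positive arguments). Since $W_{0}^{s,p}(M)$ is by definition the closure of $C_{0}^{\infty}(\Omega)$, we have $w\in W_{0}^{s,p}(M)$ and $\|w\|_{W_{0}^{s,p}(M)}>0$; the candidate will be $v=t_{\ast}w$ for a large parameter $t_{\ast}>0$ fixed at the very end.

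The key step is a lower bound for $\int_{\Omega}F(x,tw)\,d\mu_{g}$. For a.e.\ $x$ in the positive-measure set $\{w>0\}$ put $\phi_{x}(t)=F(x,tw(x))$; since $f$ is a Carath\'eodory function, $\phi_{x}\in C^{1}$ with $\phi_{x}'(t)=f(x,tw(x))\,w(x)$, so that {\rm (f4)} gives $t\,\phi_{x}'(t)=tw(x)\,f(x,tw(x))\geq\mu F(x,tw(x))=\mu\,\phi_{x}(t)$ for $t>0$, while $\phi_{x}(1)=F(x,w(x))>0$ by the strict inequality in {\rm (f4)}. Integrating $\tfrac{d}{dt}\log\phi_{x}(t)\geq\mu/t$ from $1$ to $t$ yields $\phi_{x}(t)\geq\phi_{x}(1)\,t^{\mu}$ for $t\geq1$; since $F(x,0)=0$ where $w(x)=0$, integrating over $\Omega$ gives, for all $t\geq1$,
\[
\int_{\Omega}F(x,tw(x))\,d\mu_{g}(x)\;\geq\;c_{w}\,t^{\mu},\qquad
c_{w}:=\int_{\Omega}F(x,w(x))\,d\mu_{g}(x).
\]
Here $c_{w}$ is finite, because {\rm (f1)} gives $|F(x,w(x))|\leq C(1+\|w\|_{\infty}^{q})$ on the finite-measure set $\Omega$, and $c_{w}>0$, because the integrand is nonnegative on $\Omega$ and strictly positive on $\{w>0\}$.

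Finally, since $I_{1}(w)+I_{2}(w)=\tfrac1p\|w\|_{W_{0}^{s,p}(M)}^{p}$,
\[
\psi(tw)=\frac{t^{p}}{p}\,\|w\|_{W_{0}^{s,p}(M)}^{p}-\int_{\Omega}F(x,tw(x))\,d\mu_{g}(x)
\;\leq\;\frac{t^{p}}{p}\,\|w\|_{W_{0}^{s,p}(M)}^{p}-c_{w}\,t^{\mu}
\]
for every $t\geq1$, and the right-hand side tends to $-\infty$ as $t\to+\infty$ because $\mu>p$ and $c_{w}>0$. Hence one may choose $t_{\ast}\geq1$ so large that $\psi(t_{\ast}w)<0$ and simultaneously $t_{\ast}\|w\|_{W_{0}^{s,p}(M)}\geq b$, and $v:=t_{\ast}w$ is the desired function. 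The only genuinely delicate point is the Ambrosetti--Rabinowitz differential inequality together with the verification that $F(x,w(x))$ is a.e.\ positive on $\{w>0\}$ and integrates to a finite positive constant; everything else is routine. One could instead prove the $x$-uniform estimate $F(x,t)\geq a_{1}|t|^{\mu}-a_{2}$ for suitable $a_{1},a_{2}>0$ and proceed identically, but this requires an additional essential-infimum argument that the ray-specific bound above sidesteps.
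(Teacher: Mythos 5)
Your proof follows essentially the same route as the paper's: apply the Ambrosetti--Rabinowitz condition {\rm (f4)} to obtain $F(x,tw)\geq t^{\mu}F(x,w)$ for $t\geq 1$, deduce $\psi(tw)\leq \frac{t^{p}}{p}\|w\|^{p}_{W_0^{s,p}(M)}-c_{w}t^{\mu}\to-\infty$, and take $v=t_{*}w$ with $t_{*}$ large. Your version is in fact more careful than the paper's --- you justify the growth inequality by integrating the logarithmic derivative, insist on a nonnegative test function so that {\rm (f4)} applies, and verify that $c_{w}=\int_{\Omega}F(x,w)\,d\mu_{g}$ is finite and strictly positive, points the paper's proof glosses over --- but the underlying argument is identical.
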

 \begin{proof}
    We apply the condition {\rm (f4)},  we have    $$F(x,tv)\geq t^{\mu}F(x,v), \text {for} \            t  \geq 1.$$ Consequently, we have 
 \begin{align*}
 \psi(tv)&=\frac{t^{p}}{p} \|u\|^p_{W_{0}^{s,p}(M)}+ \frac{t^{p}}{p} \|u\|^{p}_{L^{p}(M)}- \int_{\Omega }F(x,tv)d\mu _ g(x) \\
  & \leq  \frac{t^{p}}{p} (\|u\|^p_{W_0^{s,p}(M)}+C \|u\|^p_{W_0^{s,p}(M)}-\int_{\Omega}F(x,tv)d\mu _ g(x)\\
 & \leq  C' \frac{t^{p}}{p} \|u\|^p_{W_0^{s,p}(M)}-t^{\mu}\int_{\Omega }f(x,v)d\mu _ g(x).
 \end{align*}
As $\mu \geq p\geq 1, $  we have  $\psi(tv) \to -\infty  $ as $t \to \ \infty.$ So there exists $t_{0}>0$  large enough such that $\|u\|^p_{W_0^{s,p}(M)}>b$ and $\psi(t_{0}u)<0.$  We take $v = ut_{0}$ with large enough we get the result.
\end{proof}
\begin{proof} of Theorem 3.  From  lemmas (\ref{3.7})-(\ref{3.9}) and  $\psi$ satisfies  the Mountain Pass Theorem,  $\psi$ admits a critical value u; however, this u is a weak solution to the problem \eqref{k1}. As a result, the proof is finished.   
\end{proof}
\section{Uniqueness of  weak solution }\label{sec5}
Now, we   will study the following problem:

\begin{eqnarray}\label{7}
\begin{gathered}
\left\{\begin{array}{lll}
 (-\Delta_g)^s_p u(x)+|u|^{p-2} u= f(x,u) &  \text{in }& \Omega,\\
\hspace{3,2cm}  u=0 & \text{in }& M\setminus\Omega,\\
\hspace{3,2cm}  u>0 & \text{in } & \Omega.
\end{array}\right.
\end{gathered}
\end{eqnarray}
Where $ \Omega$ is an open bounded smooth-boundary subset of M.
\begin{remark}
The problem \ref{7}  is well defined.
\end{remark} 
\begin{proof}
Let u is a weak solution of our problem. Then  
\begin{eqnarray}\label{k85}
\begin{gathered}
\iint_{M \times M} \frac{|u(x)-u(y)|^{p-2}(u(x)-u(y))(v(x)-v(y))}{\left(d_{g}(x, y)\right)^{N+p s}} d \mu_{g}(x) d \mu_{g}(y) \\
+\int_{M}|u(x)|^{p-2} u(x) v(x) d \mu_{g}(x)=\int_{\Omega} f(x, u(x)) v(x) d \mu_{g}(x)
\end{gathered}
\end{eqnarray}

for any $v \in W_{0}^{s, p}(M)$. Using $v=u $ in  \ref{k85}, we have  
\begin{eqnarray}\label{k83}
\begin{gathered}
\iint_{M \times M} \frac{|u(x)-u(y)|^{p}}{\left(d_{g}(x, y)\right)^{N+p s}} d \mu_{g}(x) d \mu_{g}(y) \\
+\int_{M}|u(x)|^{p}  d \mu_{g}(x)=\int_{\Omega} f(x, u(x)) u(x) d \mu_{g}(x).
\end{gathered}
\end{eqnarray}
By condition  {\rm (f5)}, we have $u>0$. 
\end{proof}
   \begin{lemma}\label{4.2}
     Let     $v\in   W_0^{s,p}(\Omega)$  and  $g$   be a function satisfying a Lipschitz condition in $\mathbb{R}$. Then $g(v)\in W_0^{s,p}(\Omega).$
    \end{lemma}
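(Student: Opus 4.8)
The plan is to show that precomposition with a Lipschitz map $g\colon\mathbb R\to\mathbb R$ preserves membership in $W_0^{s,p}(\Omega)$ by controlling both the $L^p$ norm and the Gagliardo seminorm of $g(v)$, and then handling the boundary/closure condition by approximation. Let $L>0$ be a Lipschitz constant for $g$, so that $|g(a)-g(b)|\le L|a-b|$ for all $a,b\in\mathbb R$; we may assume without loss of generality that $g(0)=0$ (replace $g$ by $g-g(0)$, which changes $g(v)$ only by a constant, and constants lie in $W_0^{s,p}$ on the bounded set $\Omega$ after cutoff, or simply note that for the closure argument below $g(0)=0$ is the relevant normalization). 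Then $|g(v(x))|\le L|v(x)|$ pointwise, so
\begin{align*}
\int_M |g(v(x))|^p\, d\mu_g(x) \le L^p \int_M |v(x)|^p\, d\mu_g(x) < \infty,
\end{align*}
which gives $g(v)\in L^p(M)$. For the seminorm, the Lipschitz bound gives $|g(v(x))-g(v(y))|^p\le L^p|v(x)-v(y)|^p$ for a.e.\ $(x,y)$, hence
\begin{align*}
[g(v)]_{W^{s,p}(M)}^p = \iint_{M\times M}\frac{|g(v(x))-g(v(y))|^p}{(d_g(x,y))^{N+ps}}\,d\mu_g(x)\,d\mu_g(y) \le L^p\,[v]_{W^{s,p}(M)}^p < \infty.
\end{align*}
Combining, $\|g(v)\|_{W^{s,p}(M)}\le (1+L^p)^{1/p}\|v\|_{W^{s,p}(M)}$ (or a similar explicit constant), so $g(v)\in W^{s,p}(M)$.

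It remains to upgrade $g(v)\in W^{s,p}(M)$ to $g(v)\in W_0^{s,p}(\Omega)$, i.e.\ to membership in the closure of $C_0^\infty(\Omega)$. Here I would use the definition of $W_0^{s,p}(\Omega)$: since $v\in W_0^{s,p}(\Omega)$, there is a sequence $\varphi_n\in C_0^\infty(\Omega)$ with $\varphi_n\to v$ in $W^{s,p}(M)$. With the normalization $g(0)=0$, each $g(\varphi_n)$ vanishes outside $\mathrm{supp}\,\varphi_n\subset\Omega$ and is Lipschitz with compact support, hence lies in $W_0^{s,p}(\Omega)$ (a compactly supported Lipschitz function on $\Omega$ can itself be approximated by $C_0^\infty(\Omega)$ functions, e.g.\ by mollification in charts together with a fixed cutoff, using the first part of this lemma applied to the identity-like truncations, or simply by the standard density of $C_0^\infty$ in $W^{s,p}$ restricted to a fixed compact set). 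By the linear estimate just proved applied to the difference, $\|g(\varphi_n)-g(v)\|_{W^{s,p}(M)}\le (1+L^p)^{1/p}\|\varphi_n-v\|_{W^{s,p}(M)}\to 0$, so $g(v)$ is a limit in $W^{s,p}(M)$ of elements of $W_0^{s,p}(\Omega)$, and since $W_0^{s,p}(\Omega)$ is closed, $g(v)\in W_0^{s,p}(\Omega)$.

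The main obstacle I anticipate is the second step — the claim that compactly supported Lipschitz functions on $\Omega$ (in particular the $g(\varphi_n)$) are genuinely in the closure of $C_0^\infty(\Omega)$, which is a density statement rather than a pointwise estimate; on a manifold one must be slightly careful about doing the mollification in local charts and patching with a partition of unity while keeping the Gagliardo seminorm under control. One clean way around it is to invoke that $g(\varphi_n)$ being Lipschitz with support in a fixed compact $K\Subset\Omega$ already implies $g(\varphi_n)\in W^{s',p}$ for some $s'>s$ (Lipschitz functions are in every $W^{\sigma,p}$ with $\sigma<1$ on bounded sets), and then standard extension/mollification gives the approximation; alternatively one avoids the issue entirely by choosing the approximating sequence at the level of $v$ and noting $g$ maps the full space $W_0^{s,p}(\Omega)$ into itself by the same estimate once density of Lipschitz-with-compact-support functions in $W_0^{s,p}(\Omega)$ is cited from the references on fractional Sobolev spaces on manifolds (\cite{FSV1}, \cite{r10}). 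Modulo that density input, the proof is just the two displayed pointwise-to-integral estimates and a continuity-of-composition argument.
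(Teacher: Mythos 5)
Your two displayed estimates (the $L^p$ bound and the Gagliardo seminorm bound via $|g(a)-g(b)|\le L|a-b|$) are correct and the seminorm bound is exactly what the paper's own proof consists of; the paper stops there and does not address membership in the closure of $C_0^\infty(\Omega)$ at all. You are right to notice that this closure step is the real content of the lemma (and that the normalization $g(0)=0$ is genuinely needed, since otherwise $g(v)$ does not vanish on $M\setminus\Omega$ and the statement is false as written). So on the first half you match the paper, and on the second half you are attempting something the paper silently omits.

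However, the step you use to close the argument fails. You write that ``by the linear estimate just proved applied to the difference''
\[
\|g(\varphi_n)-g(v)\|_{W^{s,p}(M)}\le (1+L^p)^{1/p}\,\|\varphi_n-v\|_{W^{s,p}(M)}.
\]
The estimate you proved is $\|g(w)\|\le C\|w\|$ for a single function $w$; applying it to $w=\varphi_n-v$ controls $g(\varphi_n-v)$, not $g(\varphi_n)-g(v)$, and these differ because the superposition operator $w\mapsto g\circ w$ is not linear. Concretely, the Lipschitz hypothesis gives no pointwise control of
\[
\bigl|\bigl(g(\varphi_n(x))-g(v(x))\bigr)-\bigl(g(\varphi_n(y))-g(v(y))\bigr)\bigr|
\]
by $L\,\bigl|(\varphi_n(x)-v(x))-(\varphi_n(y)-v(y))\bigr|$, so the Gagliardo seminorm of $g(\varphi_n)-g(v)$ is not bounded by that of $\varphi_n-v$. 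Continuity of the composition operator on $W^{s,p}$ is true but needs a different argument: pass to a subsequence along which the normalized difference quotients $(\varphi_n(x)-\varphi_n(y))/d_g(x,y)^{(N+ps)/p}$ converge a.e.\ on $M\times M$ and are dominated by a fixed $L^p(M\times M)$ function; then the Lipschitz bound gives the domination for $g\circ\varphi_n$, continuity of $g$ gives a.e.\ convergence of $g(\varphi_n(x))-g(\varphi_n(y))$, and dominated (or Vitali) convergence yields $[g(\varphi_n)-g(v)]_{W^{s,p}}\to 0$. With that repair, together with a citable density statement for compactly supported Lipschitz functions in $W_0^{s,p}(\Omega)$ (which you correctly flag as an input you are not proving), your argument goes through.
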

     \begin{proof}
   Let     $v\in   W_{0}^{s,p}(\Omega).$ Using the $g$  is  a Lipschitz function we get,
   \begin{align*}
   \|g(v)\|^p_{W_0^{s,p}}(\Omega)&=\iint_{\Omega \times \Omega}\frac{|g(v(x))-g(v(y))|^{p}}{(d_g(x,y))^{N+ps}} d\mu _g(x)d\mu_g(y)\\
    &\leq l(p)\|v\|^p_{W_{0}^{s,p}}(\Omega),
   \end{align*}
   where $l$ is Lipschitz constant.
      \end{proof}
       \begin{theorem}
  Let  $f$    be a function satisfying conditions {\rm (f1)}-- {\rm (f5)},   then the problem \eqref{7} has  a unique non-trivial  solution.
 \end{theorem}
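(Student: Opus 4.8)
The plan is to prove existence via the earlier theorems and then establish uniqueness using the fractional Picone inequality (Lemma \ref{2-10}) together with the monotonicity assumption {\rm (f5)}. Existence of a positive non-trivial weak solution follows immediately: by Theorem \ref{k5} or Theorem \ref{th2} (depending on whether $1<q<p$ or $p<q<p_s^*$) problem \eqref{k1} has a non-trivial weak solution $u\in W_0^{s,p}(M)$, and by the argument in the ``well-posedness'' remark for \eqref{7} (testing with $v=u$ and invoking {\rm (f5)}) one sees that the solution can be taken strictly positive in $\Omega$. So the whole content of the theorem is the \emph{uniqueness} assertion.

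For uniqueness I would argue by contradiction: suppose $u,v\in W_0^{s,p}(M)$ are two positive weak solutions of \eqref{7}. The key is to use $\varphi_1=\frac{v^p}{u^{p-1}}$ and $\varphi_2=\frac{u^p}{v^{p-1}}$ as test functions. First one must check these are admissible, i.e.\ belong to $W_0^{s,p}(M)$: this follows from Lemma \ref{4.2} once one knows $u,v$ are bounded away from $0$ on $\Omega$ and Lipschitz-type estimates apply to the map $t\mapsto t^p/s^{p-1}$ on the relevant range — this is exactly the point flagged in the remark after Lemma \ref{2-10}. Plugging $\varphi_1$ into the weak formulation for $u$ and $\varphi_2$ into the weak formulation for $v$, the fractional Picone inequality gives
\begin{align*}
\int_\Omega \frac{v^p}{u^{p-1}}\,(-\Delta_g)^s_p u\, d\mu_g &\le \|v\|^p_{W^{s,p}(M)},\\
\int_\Omega \frac{u^p}{v^{p-1}}\,(-\Delta_g)^s_p v\, d\mu_g &\le \|u\|^p_{W^{s,p}(M)}.
\end{align*}
Since $u$ solves \eqref{7}, the left-hand side of the first inequality equals $\int_\Omega \big(f(x,u)-|u|^{p-2}u\big)\frac{v^p}{u^{p-1}}\,d\mu_g = \int_\Omega \big(h(x,u)-1\big)\,v^p\,d\mu_g$ where $h(x,t)=f(x,t)/t^{p-1}$; using $\|v\|^p_{W^{s,p}(M)}=\int_\Omega h(x,v)v^p\,d\mu_g$ (obtained by testing the equation for $v$ with $v$ itself, as in \eqref{k83}), the first inequality becomes $\int_\Omega\big(h(x,u)-h(x,v)\big)v^p\,d\mu_g\le 0$. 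Symmetrically the second gives $\int_\Omega\big(h(x,v)-h(x,u)\big)u^p\,d\mu_g\le 0$.

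Adding the two resulting inequalities yields
\[
\int_\Omega \big(h(x,u(x))-h(x,v(x))\big)\big(v(x)^p-u(x)^p\big)\,d\mu_g(x)\le 0.
\]
Now {\rm (f5)} says $t\mapsto h(x,t)$ is decreasing, so $\big(h(x,u)-h(x,v)\big)$ and $\big(v^p-u^p\big)$ always have the same sign (both controlled by the sign of $u-v$), which forces the integrand to be pointwise $\ge 0$. Hence the integrand vanishes a.e., and strict monotonicity of $h$ then forces $u(x)=v(x)$ a.e.\ in $\Omega$; since both vanish on $M\setminus\Omega$, $u=v$ in $W_0^{s,p}(M)$. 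The main obstacle I anticipate is the admissibility of the test functions $v^p/u^{p-1}$ and $u^p/v^{p-1}$: one needs a lower bound $u,v\ge c>0$ on $\Omega$ (a strong maximum / Harnack-type statement for the fractional $p$-Laplacian on the manifold, or at least interior positivity) to make the nonlinearity $t\mapsto t^p/s^{p-1}$ Lipschitz on the attained range and thereby invoke Lemma \ref{4.2}; handling this carefully, together with justifying that $(-\Delta_g)^s_p u$ is a positive bounded Radon measure so that Lemma \ref{2-10} applies, is where the real work lies. The rest is the algebraic manipulation above.
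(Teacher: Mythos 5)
Your proof is essentially the paper's own argument: the paper tests with the combined functions $\phi=u-\frac{v^{p}}{u^{p-1}}$ and $\varphi=\frac{u^{p}}{v^{p-1}}-v$ and subtracts the two weak formulations, which is exactly your two Picone inequalities added together, and then concludes from {\rm (f5)} precisely as you do. The admissibility of $\frac{v^{p}}{u^{p-1}}$ and the Radon-measure hypothesis of the Picone lemma that you flag as the real work are likewise present (and left essentially unaddressed, beyond citing Lemma \ref{4.2}) in the paper's proof.
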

    \begin{proof}
     Let $u, v \in W_{0}^{s,p}(M) $  two solutions of problem \eqref{7},  we get

\begin{eqnarray}\label{k8}
\begin{gathered}
\iint_{M \times M}\frac{|u(x)-u(y)|^{p-2}(u(x)-u(y))(\phi(x)-\phi(y))}{(d_g(x,y))^{N+ps}}
 \,d\mu  _g(x)d\mu_g(y) \\
 +\int _{M} |u(x)|^{p-2} u(x)\phi(x)d\mu_g(x)=\int_{\Omega}f(x,u(x))\phi(x)d\mu_g(x),
\end{gathered}
\end{eqnarray}
and
\begin{eqnarray}\label{k9}
\begin{gathered}
\iint_{M \times M}\frac{|u(x)-u(y)|^{p-2}(u(x)-u(y))(\varphi(x)-\varphi(y))}{(d_g(x,y))^{N+ps}}
 d\mu_g(x)d\mu_g(y) \\+\int_{M} |u(x)|^{p-2} u(x)\psi(x)d\mu_g(x)
=\int_{\Omega}f(x,u(x))\varphi(x)d\mu_g(x),
\end{gathered}
\end{eqnarray}
for any $ \phi,  \varphi\in W_{0}^{s,p}(M)$. \\
We have $\displaystyle \frac{v^p}{u^{p-1}}\in W_0^{s,p}(M),$ thanks to Lemma \ref{4.2}.  Since  $W_0^{s,p}(M) $ is a space vector,  it yields $\displaystyle u-\frac{v^{p}}{u^{p-1}}$ and $\displaystyle \frac{u^{p}}{v^{p-1}}-v  \in W_0^{s,p}(M).$
Using  $\displaystyle \phi=u-\frac{v^{p}}{u^{p-1}}$ and $\displaystyle \varphi=\frac{u^{p}}{v^{p-1}}-v$ in \eqref{k8} and \eqref{k9} respectively, we have $$
\langle(  I _{2})'(u), \phi \rangle -  \langle(  I _{2})'(v), \varphi \rangle= \int _{\Omega}(u^{p}-v^{p}) \Big(\frac{f(x,u)}{u^{p-1}}-\frac{f(x,v)}{v^{p-1}}\Big).$$
 Thanks to  {\rm (f5)}   we obtain  \begin{equation}
\int _{\Omega}(u^{p}-v^{p}) \Big(\frac{f(x,u)}{u^{p-1}}-\frac{f(x,v)}{v^{p-1}}\Big)\leq
 0. \end{equation}
On the  other hand     \begingroup\makeatletter\def\f@size{10.1}\check@mathfonts

\begin{align*}
      & \langle(  I _{2})'(u), \phi \rangle -  \langle(  I _{2})'(v), \varphi \rangle\\&= \iint_{M \times M}\frac{|u(x)-u(y)|^{p-2}(u(x)-u(y))(\phi(x)-\phi(y))}{(d_g(x,y))^{N+ps}} \, d\mu  _g(x)d\mu_g(y)\\
       &+ \iint_{M \times M}\frac{|v(x)-v(y)|^{p-2}(v(x)-v(y))(\varphi(x)-\varphi(y))}{(d_g(x,y))^{N+ps}} \,d\mu  _g(x)d\mu_g(y)\\
       &=\|u\|^p_{W_0^{s,p}(M)}- \int _{\Omega}\frac{v^{p}}{u^{p-1}}(-\Delta_g)^s_p u+\|v\|^p_{W_0^{s,p}(M)}- \int _{\Omega}\frac{u^{p}}{v^{p-1}}(-\Delta_g)^s_p v\cdot
\end{align*}
 \endgroup
Through fractional Picone inequality, we achieve  
 \begin{equation} \langle(  I _{2})'(u), \phi \rangle -  \langle(  I _{2})'(v), \varphi \rangle\geq 0. \end{equation}
Let us collect with (4.6) and (4.7)  let us get $$\int _{\Omega}(u^{p}-v^{p}) (\frac{f(x,u)}{u^{p-1}}-\frac{f(x,v)}{v^{p-1}})=0.$$
The conesequence of  {\rm (f5)}   is  that   $u=v$ a.e in $\Omega.$
 \end{proof}
 
\end{document}